\newtheorem{theorem}{Theorem}[section]
\newtheorem{lemma}[theorem]{Lemma}
\newtheorem{proposition}[theorem]{Proposition}
\newtheorem{corollary}[theorem]{Corollary}
\theoremstyle{definition}
\newtheorem{definition}[theorem]{Definition}
\newtheorem{example}[theorem]{Example}
\theoremstyle{remark}
\newtheorem{remark}[theorem]{Remark}
\numberwithin{equation}{section}
\newcommand{\Z}{\mathbb{Z}}
\newcommand{\mcP}{\mathcal{P}}
\newcommand{\mcM}{\mathcal{M}}
\newcommand{\mcDx}{\mathcal{D}}
\newcommand{\mcDy}{\mathcal{B}}
\newcommand{\mcN}{\mathcal{N}}
\newcommand{\mcC}{\mathcal{C}}
\newcommand{\ra}{\rightarrow}
\newcommand{\kfield}{\mathbf k}
\newcommand{\BC}{{\rm BiCompl}}
\newcommand{\BCv}{{\rm BiCompl}_v}
\newcommand{\antishriek}{\text{!`}}
\newcommand{\As}{{\mathcal{A}s}}
\newcommand{\dAs}{\mathrm{d}\As}
\newcommand{\Coder}{{ \rm Coder}}
\newcommand{\Hom}{{\rm Hom}}
\newcommand{\Mor}{{\rm Mor}}
\newcommand{\sline}{
\vspace{1.5ex}
}
\begin{document}
\title{Representations of derived $A$-infinity algebras}

\author[Aponte Rom\'an]{Camil I. Aponte Rom\'an}
\address{University of Washington\\ Department of Mathematics
Box 354350 Seattle\\ WA 98195-4350}
\email{camili@math.washington.edu}

\author[Livernet]{Muriel Livernet}
\address{Universit\'e Paris 13\\ 
Sorbonne Paris Cit\'e\\ 
LAGA, CNRS, UMR 7539\\ 
 93430  Villetaneuse\\ France}
 \email{livernet@math.univ-paris13.fr}

 \author[Robertson]{Marcy Robertson}
 \address{Department of Mathematics\\
Middlesex College\\
The University of Western Ontario\\
London, Ontario\\
Canada, N6A 5B7}
\email{mrober97@uwo.ca}

\author[Whitehouse]{Sarah Whitehouse}
\address{School of Mathematics and Statistics\\ 
University of Sheffield\\ S3 7RH\\ England}
\email{s.whitehouse@sheffield.ac.uk}

\author[Ziegenhagen]{Stephanie Ziegenhagen}
\address{Fachbereich Mathematik der Universit\"at Hamburg\\
Bundesstrasse 55\\
D-20146 Hamburg\\ Germany}
\email{stephanie.ziegenhagen@uni-hamburg.de}

\begin{abstract}
The notion of a derived A-infinity algebra arose in the work of Sagave as a 
natural generalisation of the classical A-infinity algebra, relevant to the case where one works
over a commutative ring rather than a field.
We develop some of the basic operadic theory of derived $A$-infinity algebras, building on
work of Livernet-Roitzheim-Whitehouse. In particular, we study the coalgebras over the Koszul dual cooperad
of the operad $\dAs$, and provide a simple description of these. We study representations of
derived $A$-infinity algebras and explain how these are a two-sided version of Sagave's modules 
over derived  $A$-infinity algebras. We also give a new explicit example of a derived $A$-infinity algebra.
\end{abstract}

\maketitle

\tableofcontents

\section*{Acknowledgements}
The authors would like to thank the organizers of the Women in Topology workshop in Banff in August 2013
for bringing us together to work on this paper. 

\section{Introduction}
\label{sec:introduction}

Strongly homotopy associative algebras, also known as $A_{\infty}$-algebras, were invented at the beginning of the sixties by Stasheff as a tool in the study of ‘group-like’ topological spaces. Since then it has become clear that $A_{\infty}$-structures are relevant in algebra, geometry and mathematical physics. In particular, Kadeishvili used the existence of $A_{\infty}$-structures in order to classify differential graded algebras over a field up to quasi-isomorphism~\cite{Kad79}.  When the base field is replaced by a commutative ring, however, Kadeishivili's result no longer holds. If the homology of the differential graded algebra is not projective over the ground ring there need no longer be a minimal $A_{\infty}$-algebra quasi-isomorphic to the given differential graded algebra. 

In order to bypass the projectivity assumptions necessary for Kadeishvili's result, Sagave developed the notion of
derived $A_\infty$-algebras~\cite{Sag10}. While classical $A_\infty$-algebras are graded algebras, derived $A_\infty$--algebras are bigraded
algebras. Sagave establishes a notion of minimal model for differential graded algebras (dgas) whose homology is not
necessarily projective by showing that the structure of a derived $A_\infty$--algebra arises on some projective resolution of the
homology of a differential graded algebra.

In this paper, we continue the work of~\cite{LRW13}, developing the description of these structures using operads. The operads we use are non-symmetric operads in the category $\BCv$ of bicomplexes with zero horizontal differential. 
We have an operad $\dAs$ in this category encoding bidgas, which are simply monoids in bicomplexes. 
It is shown in~\cite{LRW13} that derived $A_\infty$-algebras are 
precisely algebras over the operad $$dA_\infty=(\dAs)_\infty= \Omega((\dAs)^{\antishriek}).$$ 
Here $(\dAs)^{\antishriek}$ is the Koszul dual cooperad of the operad  $\dAs$, and $\Omega$ 
denotes the cobar construction.
In this manner, we view a derived  $A_\infty$-algebra as the infinity version
of a bidga,  just as an $A_\infty$-algebra is the infinity version of a dga.
\medskip

We further investigate the operad $\dAs$, in particular studying $(\dAs)^{\antishriek}$-coalgebras. The structure of an $\As^{\antishriek}$-coalgebra is well-known to be equivalent, via a suspension, to
that of a usual coassociative coalgebra. Analogously, $(\dAs)^{\antishriek}$-coalgebras are equivalent, via a suspension in the vertical direction, to coassociative coalgebras which are equipped with an extra piece of structure.

A substantial part of this paper is concerned with representations of derived $A_\infty$-algebras.
Besides being an important part of the basic operadic theory of these algebras, we will use this theory 
in subsequent work to develop the Hochschild cohomology of derived $A_\infty$-algebras with coefficients.
In section~\ref{sec:representations}, we give a general result expressing a representation of a $\mathcal{P}_\infty$-algebra for any Koszul operad $\mathcal{P}$ in terms of 
a square-zero coderivation. Then we work this out explicitly for the derived $A_\infty$ case. 
We explain how this relates to Sagave's derived $A_\infty$-modules: the operadic notion of representation yields a two-sided version of Sagave's modules.

Finally, we present a new, explicit example of a derived $A_\infty$-algebra. The construction
is based on some examples of $A_\infty$-algebras due to Allocca and Lada~\cite{Allocca10}. 
\medskip

The paper is organized as follows. In section~\ref{sec:review} we begin with a brief review of previous work on  derived $A_\infty$-algebras and establish our notation and conventions. Sections~\ref{sec:coalgebras} and~\ref{sec:representations}
cover the material on   $(\dAs)^{\antishriek}$-coalgebras, coderivations and representations.  Section~\ref{sec:examples} presents our new example. A brief appendix establishes the relationship between two standard sign conventions and gives details of cooperadic suspension in our bigraded setting.

\section{Review of derived $A_\infty$-algebras}
\label{sec:review}

In this section we  establish our notation and conventions. We review Sagave's definition of 
derived $A_{\infty}$-algebras from~\cite{Sag10}
and we 
explain the operadic approach of~\cite{LRW13}. 

\subsection{Derived $A_{\infty}$-algebras}

Let $\kfield$ denote a commutative ring unless otherwise stated. We start by considering $(\mathbb{Z},\mathbb{Z})$-bigraded $\kfield$-modules
\[
A = \bigoplus\limits_{i \in \mathbb{Z}, j \in \mathbb{Z}} A^j_i.
\]

We will use the following grading conventions. An element in $A^j_i$ is said to be of bidegree $(i,j)$. 
We call $ i$ the horizontal degree and $j$ the vertical degree.
We have two suspensions:
$$(sA)^j_i = A^{j+1}_i \quad \text{and} \quad (SA)^j_i = A^j_{i+1}.$$
A morphism of bidegree $(u,v)$ maps $A^j_i$ to $A^{j+v}_{i+u}$, hence is a map
of bidegree $(0,0)$
$$s^{-v} S^{-u} A \ra A.$$  

We remark that this is a different convention to that adopted in~\cite{LRW13}.
The difference is a matter of changing the first grading from homological to cohomological conventions.

Note also that our objects are graded over $(\mathbb{Z},\mathbb{Z})$.
The reason for the change will be explained below.

The following definition of (non-unital) derived $A_\infty$-algebra is that of~\cite{Sag10},
except that we generalize to allow a $(\Z,\Z)$-bigrading, rather than an $(\mathbb{N}$,$\mathbb{Z})$-bigrading. 
(Sagave avoids $(\Z,\Z)$-bigrading because of potential problems taking total complexes, but this is not an issue
for the purposes of the present paper.)

\begin{definition}\label{defn:dAinftystr}
A \emph{derived $A_\infty$-algebra} is a $(\mathbb{Z}$,$\mathbb{Z})$-bigraded $\kfield$-module $A$
equipped with $\kfield$-linear maps
\[
m_{ij}: A^{\otimes j} \longrightarrow A
\]
of bidegree $(-i,2-i-j)$ for each $i \ge 0$, $j\ge 1$,  satisfying the equations
\begin{equation}\label{dobjectequation}
\sum\limits_{\substack{ u=i+p, v=j+q-1 \\ j=1+r+t}} (-1)^{rq+t+pj}
m_{ij} (1^{\otimes r} \otimes m_{pq} \otimes 1^{\otimes t}) = 0
\end{equation}
for all $u\ge 0$ and $v \ge 1$. 
\end{definition}
 
Note that the map $m_{ij}$ maps from $(A^{\otimes j})_{\alpha}^{\beta}$ to
 $(A^{\otimes j})_{\alpha-i}^{\beta+2-i-j}$, just as in~\cite{LRW13}. Thus the different convention for bidegrees 
has no effect on signs.

Examples of derived $A_\infty$-algebras include classical $A_\infty$-algebras, which are derived $A_\infty$-algebras concentrated in horizontal degree 0. Other examples 
are bicomplexes, bidgas and twisted chain complexes (see below). 

We remark that we follow the sign conventions of Sagave~\cite{Sag10}. For a derived $A_\infty$-algebra concentrated in horizontal degree 0, one obtains one of the standard sign conventions for $A_\infty$-algebras. The appendix contains
a discussion of alternative sign conventions, with a precise description of the relationship between them.

\subsection{Twisted chain complexes}

The notion of \emph{twisted chain complex} is important in the theory of derived $A_\infty$-algebras.
The term \emph{multicomplex} is also used for a twisted chain complex.

\begin{definition}
\label{def:twistedchaincx}
A \emph{twisted chain complex} $C$ is a $(\mathbb{Z},\mathbb{Z})$-bigraded $\kfield$-module with $\kfield$-linear maps $d_i^C:C\longrightarrow C$ of  bidegree
$(-i, 1-i)$ for $i\geq 0$, satisfying $\sum_{i+p=u} (-1)^{i}d_id_p=0$ for $u\geq 0$. A \emph{map of twisted chain complexes} $C\longrightarrow D$ is 
a family of maps $f_i:C\longrightarrow D$, for $i\geq 0$, of bidegree $(-i,-i)$, satisfying
    $$
    \sum_{i+p=u} (-1)^{i} f_id^C_p = \sum_{i+p=u}  d^D_i f_p.
    $$
The composition of maps $f:E\to F$ and $g: F\to G$ is defined by $(gf)_u=\sum_{i+p=u}g_if_p$ and the resulting category
is denoted ${\rm{tCh}}_k$.
\end{definition}

A derived $A_\infty$-algebra has an underlying twisted chain complex, specified by the maps $m_{i1}$ for $i\geq 0$.

\subsection{Vertical bicomplexes and operads in vertical bicomplexes}

The underlying category for the operadic view of derived $A_\infty$-algebras is the category of
vertical bicomplexes.

\begin{definition}
\label{def:vbicx}
An object of the category of \emph{vertical bicomplexes} $\BC_v$ is a bigraded $\kfield$-module as above equipped with a vertical differential
\[
d_A: A^j_i \longrightarrow A^{j+1}_{i}
\]
of bidegree $(0,1)$. The morphisms are those morphisms of bigraded modules commuting with the vertical differential. We denote by $\Hom(A,B)$ the set of morphisms (preserving the bigrading) from $A$ to $B$.
\end{definition}

The category $\BC_v$ is isomorphic to the category of $\mathbb{Z}$-graded chain complexes of $\kfield$-modules.

For the suspension $s$ as above, we have $d_{sA}(sx)=-s(d_Ax).$

The tensor product of two vertical bicomplexes $A$ and $B$ is given by
    $$
    (A\otimes B)_u^v=\bigoplus_{i+p=u,\, j+q=v}A_i^j\otimes B_p^q,
    $$
with $d_{A\otimes B}=d_A\otimes 1+1\otimes d_B:(A\otimes B)_u^v\to (A\otimes B)_u^{v+1}$.
This makes $\BC_v$ into a symmetric monoidal category.
\medskip

Let  $A$ and $B$ be two vertical bicomplexes. We write $\Hom_\kfield$ for morphisms of $\kfield$-modules. We will denote by $\Mor(A,B)$ the vertical bicomplex given by
 	$$
	\Mor(A,B)_u^{v}=\prod_{\alpha,\beta} \Hom_\kfield(A^\beta_{\alpha},B^{\beta+v}_{\alpha+u}),
	$$ 
with vertical differential given by $\partial_{\Mor}(f)= d_Bf - (-1)^{j}f d_A$ for $f$ of bidegree $(l,j)$.
The reason for the change of grading conventions is that, 
with the convention adopted here,  $\text{Mor}$ is now an internal Hom on  $\text{Bicompl}_v$.
\medskip

The following notation will be useful in applying the Koszul sign rule.
We denote by $|(r,s)||(r',s')|$ the integer $rr'+ss'$.

\subsection{The operad $\dAs$} 

We now describe an operad in $\BC_v$.  All operads considered in this paper are non-symmetric.  

	\begin{definition} 
\label{def:plethysm}
A \emph{collection} in $\BC_v$ is a sequence $A(n)_{n\geq 1}$ where $A(n)$ is a vertical bicomplex for each $n\geq 1$.
We denote by $\mathcal{C}\BC_v$ the category of collections of vertical bicomplexes. This category is endowed with a monoidal structure, the plethysm given by,
\[
(M\circ N)(n)=\bigoplus_{k,\ l_1+\cdots+l_k=n} M(k)\otimes N(l_1)\otimes\cdots\otimes N(l_k),
\]
 for any two collections $M$ and $N$.
The unit for the plethysm is given by the collection 
\[
I(n)=\begin{cases} 0, & \text{ if } n\not=1 \\
\kfield \text { concentrated in bidegree } (0,0), & \text{ if } n=1.\end{cases}
\]
\end{definition}
\begin{definition} A (non-symmetric) \emph{operad} in $\BC_v$ is a monoid in $\mathcal{C}\BC_v$.
\end{definition}

We adopt standard operad notation, so that  $\mathcal{P}(M,R)$ denotes the operad defined by generators
and relations $\mathcal{F}(M)/(R)$, where $\mathcal{F}(M)$ is the free (non-symmetric) operad
on the collection $M$.

\begin{definition}
\label{def:dAs}
The operad $\dAs$ in $\BC_v$ is defined as $\mathcal{P}(M_{\dAs},R_{\dAs})$ where
$$M_{\dAs}(n)=\begin{cases} 0, & \text{if } n>2, \\
\kfield  m_{02} \text{ concentrated in bidegree } (0,0), & \text {if } n=2, \\
\kfield  m_{11} \text{ concentrated in bidegree } (-1,0), & \text {if } n=1, \end{cases}$$
and
$$R_{\dAs}=\kfield (m_{02}\circ_1 m_{02}-m_{02}\circ_2 m_{02})\oplus
\kfield m_{11}^2 \oplus \kfield (m_{11}\circ_1 m_{02}-m_{02}\circ_1 m_{11} - m_{02}\circ_2 m_{11}),$$
with trivial vertical differential.
\end{definition}

The algebras for this operad are easily seen to be the bidgas, that is associative monoids in
bicomplexes; see~\cite[Proposition 2.5]{LRW13}. Note that one differential comes from the vertical differential on objects in the underlying category,
while the operad encodes the other differential and the multiplication.
\medskip

The operad $\dAs$ is Koszul and one of the main results of~\cite{LRW13} identifies the associated infinity
algebras.

\begin{theorem}\cite[Theorem 3.2]{LRW13}
A derived $A_\infty$-algebra is precisely a $(\dAs)_\infty=\Omega((\dAs)^{\antishriek})$-algebra.
\end{theorem}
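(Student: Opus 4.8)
The plan is to exploit Koszul duality for operads, using the fact stated above that $\dAs$ is Koszul. Since $(\dAs)_\infty=\Omega((\dAs)^{\antishriek})$ is quasi-free — namely the free operad $\mathcal{F}(s^{-1}\overline{(\dAs)^{\antishriek}})$ on the desuspended coaugmentation coideal of the Koszul dual cooperad, equipped with the cobar differential — an algebra structure over it on a vertical bicomplex $A$ is the same as a morphism of operads $\Omega((\dAs)^{\antishriek})\to\End_A$. By freeness this is determined by its restriction to generators, i.e.\ by a bidegree-preserving morphism of collections $s^{-1}\overline{(\dAs)^{\antishriek}}\to\End_A$, subject to the single condition that it intertwines the cobar differential on the source with the internal differential of $\End_A$. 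The whole statement therefore reduces to two explicit computations: identifying the generators of $\Omega((\dAs)^{\antishriek})$ together with the cobar differential, and unwinding this compatibility condition.

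First I would compute the Koszul dual cooperad $(\dAs)^{\antishriek}$. As $\dAs=\mathcal{P}(M_{\dAs},R_{\dAs})$ is quadratic, $(\dAs)^{\antishriek}$ is the quadratic dual cooperad, cogenerated by $sM_{\dAs}$ with corelations the annihilator of $R_{\dAs}$. Concretely, the associativity relation on $m_{02}$ controls the coassociative part, while the relation $m_{11}^2$ together with the Leibniz-type relation $m_{11}\circ_1 m_{02}-m_{02}\circ_1 m_{11}-m_{02}\circ_2 m_{11}$ control the unary and mixed parts. Using the Koszul property, I expect the outcome to be that $(\dAs)^{\antishriek}$ has, in arity $j$, a one-dimensional piece for each future generator, indexed by pairs $(i,j)$ with $i\ge 0$, $j\ge 1$ and $(i,j)\neq(0,1)$, each placed in the bidegree dual to that generator; this is the simple description promised in the introduction. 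Desuspending, $\Omega((\dAs)^{\antishriek})$ is then the free operad on symbols $m_{ij}$ for such $(i,j)$, and a bidegree count through the two suspensions $s$ and $S$ of Section~\ref{sec:review} shows that each $m_{ij}$ has bidegree $(-i,2-i-j)$, matching Definition~\ref{defn:dAinftystr}.

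With the generators in hand, an operad morphism $\Omega((\dAs)^{\antishriek})\to\End_A$ is exactly a choice of $\kfield$-linear maps $m_{ij}\colon A^{\otimes j}\to A$ of bidegree $(-i,2-i-j)$, one per generator. The remaining content is the differential condition. The cobar differential of a cogenerator is the sum of its two-fold cooperadic decompositions, so requiring the morphism to commute with differentials — where $\End_A=\Mor(A,A)$ carries $\partial_{\Mor}(f)=d_Af-(-1)^jfd_A$ for $f$ of vertical degree $j$ — produces, for each pair $(u,v)$ with $u\ge 0$, $v\ge 1$, a single quadratic relation among the operations. The terms coming from $\partial_{\Mor}$ are exactly those involving $d_A$, which I would identify with the missing operation $m_{01}$ of bidegree $(0,1)$; incorporating it into the family, the relation becomes precisely equation~\eqref{dobjectequation}, summed over $u=i+p$, $v=j+q-1$ and $j=1+r+t$.

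The step I expect to be the main obstacle is matching the signs. The cobar differential, the Koszul sign rule in the bigraded category $\BC_v$, the two suspensions, and the differential $\partial_{\Mor}$ each contribute signs which must reconcile with Sagave's prescribed $(-1)^{rq+t+pj}$. Because the relations are quadratic and the cooperadic decomposition is concentrated in the two-generator part, the combinatorics of which terms appear is transparent; the difficulty lies entirely in tracking horizontal and vertical degrees through the desuspension conventions so that the signs agree on the nose. Once this is settled the two descriptions coincide as a strict equality of structures rather than merely an equivalence up to homotopy, which is what justifies the word \emph{precisely} in the statement.
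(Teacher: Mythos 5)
Your proposal is correct and takes essentially the same route as the proof this statement rests on (Theorem 3.2 of \cite{LRW13}, which the present paper cites rather than reproves): compute $(\dAs)^{\antishriek}$ explicitly as the collection with one-dimensional pieces spanned by generators $\mu_{uv}$ of bidegree $(-u,1-u-v)$, then use quasi-freeness of $\Omega((\dAs)^{\antishriek})$ to identify an algebra structure on $A$ with a family of maps $m_{ij}$ of bidegree $(-i,2-i-j)$ satisfying Sagave's equation, the vertical differential $d_A$ supplying the missing operation $m_{01}$ via the compatibility with the cobar differential. The only small imprecision is that the Koszul dual cooperad is cogenerated by $sM_{\dAs}$ with corelations $s^{2}R_{\dAs}$ rather than ``the annihilator of $R_{\dAs}$'' (the annihilator description pertains to the Koszul dual operad $(\dAs)^{!}$), but this does not affect your argument.
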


\section{Coalgebras over the Koszul dual cooperad}
\label{sec:coalgebras}

In this section we initiate a study of the operad $\dAs$ and related objects.
In particular we consider the category of coalgebras over the Koszul dual cooperad of $\dAs$
 and coderivations of such coalgebras. 
This will allow us to give an operadic explanation of Sagave's reformulation of a derived
$A_\infty$-algebra structure in terms of certain structure on the tensor coalgebra.
We begin by setting up cooperads and their coalgebras. Then we
recall the classical case for the associative operad $\As$, before 
considering the derived case.

\subsection{Cooperads and coalgebras}

We briefly set up our conventions for non-symmetric cooperads and (conilpotent) coalgebras over cooperads. 

A non-symmetric cooperad in a monoidal category is a comonoid in the associated category of
collections endowed with the monoidal structure given by plethysm $\circ$ of collections; see Definition~\ref{def:plethysm}. Thus a
non-symmetric cooperad $\mathcal{C}$ has a structure map $\Delta: \mathcal{C}\to \mathcal{C}\circ \mathcal{C}$,
satisfying standard coassociative and counital conditions.

A conilpotent coalgebra $C$ over a cooperad $\mathcal{C}$ has a structure map
	\[
	\Delta_C: C\to \mathcal{C}(C)=\bigoplus_k \mathcal{C}(k)\otimes C^{\otimes k},
	\]
satisfying the standard compatibility with the cooperad structure of $\mathcal{C}$.

\subsection{Cooperadic suspension}\label{subsec:coopsusp}

The notion of suspension of an operad  as in \cite[Section 1.3] {GJ94} can be adapted to collections. 

We define the operation $\Lambda R$ for any collection $R$  in $\BCv$ as follows:

$$\Lambda R(n)=s^{1-n} R(n).$$

If $\mathcal{R}$ is a non-symmetric (co)operad so is $\Lambda{\mathcal{R}}$ and  if ${\mathcal{R}}(V)$ denotes the free (co)algebra (co)generated by $V$ then
$$(\Lambda {\mathcal{R}})(sV)\cong s {\mathcal{R}}(V).$$
Consequently, $V$ is an $\mathcal{R}$-(co)algebra if and only if $sV$ is a $\Lambda \mathcal{R}$-(co)algebra. Equivalently $V$ is a $\Lambda \mathcal{R}$-(co)algebra if and only if
$s^{-1}V$ is an $\mathcal{R}$-(co)algebra. Indeed this construction gives rise to an isomorphism of (co)algebra
categories.

\smallskip

Further details about cooperadic suspension can be found in the appendix, explaining in detail the signs involved
in our bigraded setting.

\subsection{The classical case, $\As^{\antishriek}$-coalgebras}

We denote by  $\As$ the usual operad for associative algebras. This can be viewed either as an operad  in differential 
graded modules, which is the usual classical context, or equivalently in vertical bicomplexes (in which case it is concentrated in horizontal degree zero).
 In the case of this operad, there is a well-known nice relationship, via suspension, between
 the operadic notion of coalgebra over the cooperad  $\As^{\antishriek}$ and ordinary
coassociative coalgebras.

Indeed, since $\Lambda \As^{\antishriek}= \As^*$ we have the following result.

\begin{proposition}
\label{prop:ascoalg}
Cooperadic suspension gives rise to an isomorphism of categories between the
category of conilpotent coalgebras over the cooperad  $\As^{\antishriek}$ 
and the category of conilpotent coassociative coalgebras.

Under this isomorphism the notion of coderivation $d: C \to C$ on a 
coassociative coalgebra $C$ corresponds to the operadic  notion of coderivation 
on the corresponding $\As^{\antishriek}$--coalgebra, $s^{-1}C$. \qed
\end{proposition}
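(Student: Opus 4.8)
The plan is to establish the isomorphism of categories as a direct consequence of the cooperadic suspension machinery set up in Subsection~\ref{subsec:coopsusp}, then verify that coderivations correspond under this isomorphism. The key observation, already recorded in the excerpt, is that $\Lambda \As^{\antishriek} = \As^*$, where $\As^*$ is the cooperad whose coalgebras are exactly the conilpotent coassociative coalgebras. So the isomorphism of categories is really just an instance of the general principle stated in Subsection~\ref{subsec:coopsusp}: for a non-symmetric cooperad $\mathcal{R}$, the functor $V \mapsto sV$ gives an isomorphism between the category of $\mathcal{R}$-coalgebras and the category of $\Lambda\mathcal{R}$-coalgebras, with inverse $W \mapsto s^{-1}W$. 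Taking $\mathcal{R} = \As^{\antishriek}$, a vector space $V$ is an $\As^{\antishriek}$-coalgebra if and only if $sV$ is a $\Lambda\As^{\antishriek} = \As^*$-coalgebra, i.e.\ a conilpotent coassociative coalgebra.

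\emph{First} I would make explicit the identification $\Lambda\As^{\antishriek} = \As^*$, recalling that $\As^{\antishriek}(n)$ is one-dimensional, placed in the degree dictated by the Koszul duality of $\As$, and that applying $\Lambda$ (which shifts $\As^{\antishriek}(n)$ by $s^{1-n}$) exactly cancels this internal shift so that $\As^*(n)$ lands in degree zero in each arity; this is the precise content of the classical statement that suspension turns the Koszul dual cooperad of $\As$ into the coassociative cooperad. \emph{Next}, I would invoke the general isomorphism of coalgebra categories from Subsection~\ref{subsec:coopsusp} with $\mathcal{R} = \As^{\antishriek}$: the assignment $C \mapsto s^{-1}C$ sends a conilpotent coassociative coalgebra $C$ (an $\As^*$-coalgebra) to an $\As^{\antishriek}$-coalgebra, and this is a functor with a two-sided inverse, hence an isomorphism of categories. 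This handles the first claim.

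For the statement about coderivations, I would argue that the functor $C \mapsto s^{-1}C$ carries the operadic coderivation structure to the classical one. A coderivation on a $\mathcal{C}$-coalgebra is a map compatible with the coalgebra structure map $\Delta_C \colon C \to \mathcal{C}(C)$ via the cofree-coalgebra comultiplication, and under the isomorphism $(\Lambda\mathcal{R})(sV) \cong s\mathcal{R}(V)$ the cofree coalgebra on $V$ corresponds to the cofree coalgebra on $sV$ after suspension. Since the isomorphism of categories is induced by the natural suspension isomorphism, it transports the defining compatibility of a coderivation from one side to the other: a map $d \colon C \to C$ is a coderivation for the coassociative structure on $C$ if and only if $s^{-1}d s \colon s^{-1}C \to s^{-1}C$ is an operadic coderivation of the $\As^{\antishriek}$-coalgebra $s^{-1}C$. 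The verification amounts to chasing the suspension isomorphism through the commuting square defining a coderivation and tracking the Koszul signs.

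\emph{The main obstacle} I anticipate is purely a matter of sign bookkeeping: one must check that the suspension isomorphism $\Lambda\As^{\antishriek}(sV) \cong s\,\As^{\antishriek}(V)$ intertwines the comultiplications with the correct signs, and that the conjugation $d \mapsto s^{-1}d s$ produces exactly the operadic coderivation condition without spurious sign discrepancies. Since the categorical equivalence is formal once $\Lambda\As^{\antishriek} = \As^*$ is granted, no genuinely hard step remains; the content is entirely in confirming that the signs introduced by the suspension $s$ (for which $d_{sA}(sx) = -s(d_Ax)$, as in Subsection~\ref{def:vbicx}) are consistent across both the coalgebra axioms and the coderivation axioms. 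These sign computations are exactly the subject deferred to the appendix, so I would reference that appendix for the detailed verification rather than reproduce it here.
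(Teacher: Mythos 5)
Your proposal is correct and follows essentially the same route as the paper, which justifies the proposition in one line via the identity $\Lambda \As^{\antishriek} = \As^*$ together with the general principle from Section~\ref{subsec:coopsusp} that cooperadic suspension induces an isomorphism of coalgebra categories, leaving the coderivation correspondence and sign bookkeeping as a routine check (as you do by deferring to the appendix).
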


We note that one can remove the conilpotent hypothesis at the expense of using a
completed version of the tensor coalgebra.
\smallskip

Recall that $\As^{\antishriek}(A)=s^{-1}\overline{T}^c(sA)$, the shifted reduced tensor
coalgebra on $sA$.

We can see the basic idea of how the isomorphism works on objects very explicitly:
 given a coassociative coalgebra C with comultiplication 
$\Delta:C \to C \otimes C$, this completely determines an $\As^{\antishriek}$-coalgebra structure on $s^{-1}C$
	\[
	\overline{\Delta}: s^{-1}C \to \As^{\antishriek}(s^{-1}C)=s^{-1}\overline{T}^c(C).
	\]
The components of this map are forced to be (shifted) iterations of the coassociative comultiplication $\Delta$,
that is,  we have $\overline{\Delta}=\sum_{i=0}^\infty s^{-1}\Delta^{(i)}$. (Here we make the
conventions $\Delta^{(0)}=s^{-1}1_C$, $\Delta^{(1)}=s^{-1}\Delta$.)

And, on the other hand, an  $\As^{\antishriek}$-coalgebra structure has to be of this form.
\medskip

Now we have the general theorem that for a suitable operad $\mathcal{P}$, a 
$\mathcal{P}_\infty$-algebra structure on $A$ is equivalent to a square-zero
coderivation of degree one on the $\mathcal{P}^{\antishriek}$-coalgebra $\mathcal{P}^{\antishriek}(A)$; 
see~\cite[10.1.13]{LV12}.
\smallskip

So in the case $\mathcal{P}=\As$, we get that an $\As_\infty=A_\infty$-structure on
$A$ is equivalent to a square-zero coderivation  of degree one on the $\As^{\antishriek}$-coalgebra 
$\As^{\antishriek}(A)=s^{-1}\overline{T}^c(sA)$. And, by the above, this is equivalent to
a square-zero coderivation  of degree one on the coassociative coalgebra $\overline{T}^c(sA)$.

\subsection{The operad of dual numbers}

We recall the situation for the operad of dual numbers, since the operad $\dAs$ can be built 
from the ope\-rad $\As$ and the operad of dual numbers, via a distributive law.

The operad of dual numbers only contains arity one
operations, so it can be thought of as just a $\kfield$-algebra, and algebras over this operad correspond to (left)
modules over this $\kfield$-algebra. So let $\mcDx=\kfield[\epsilon]/(\epsilon^2)$ be the algebra of dual numbers.
We consider this as a bigraded algebra, where the bidegree of $\epsilon$ is $(-1,0)$.

Then consider the Koszul dual cooperad  $\mcDx^{\antishriek}$. Again this is concentrated in
arity one and can be thought of as just a $\kfield$-coalgebra. We have
 $\mcDx^{\antishriek}=\kfield[x]$, where $x=s\epsilon$, $x$ has bidegree
$(-1,-1)$ and the comultiplication is determined by $\Delta (x^n)=\sum_{i+j=n} x^i\otimes x^j$.

A $\mcDx^{\antishriek}$-coalgebra is a comodule $C$ over this coalgebra and this 
turns out to just be a pair $(C,f)$, where $C$ is a  $\kfield$-module
and $f$ is a linear map $f:C\to C$ of bidegree $(1,1)$. 
(Given a coaction $\rho:C\to \mcDx^{\antishriek}\otimes C=\kfield[x]\otimes C$, 
write $f_i$ for the projection onto $\kfield x^i \otimes C$; then coassociativity gives
$f_{m+n}=f_mf_n$, so the coaction is determined by $f_1$.) 
A coderivation is a linear map $d:C\to C$ of bidegree $(r,s)$
such that $df=(-1)^{|(r,s)||(1,1)|}fd$, that is  $df=(-1)^{r+s}fd$. In particular, if $d$ has bidegree
$(0,1)$ then it anti-commutes with $f$.

\subsection{The derived case, $(\dAs)^{\antishriek}$-coalgebras}
\label{subsec:dAsCoalgebras}

\smallskip

We recall from~\cite[Lemma 2.6]{LRW13} that the operad $\dAs$ can be built from the ope\-rad $\As$ and the operad of dual numbers,
via a distributive law, so that we have an isomorphism of operads $\dAs \cong\As\circ\mcDx$.

We have, on underlying collections,  $(\dAs)^{\antishriek}\cong \mcDx^{\antishriek}\circ  (\As)^{\antishriek}$.
Since $\mcDx^{\antishriek}$ is concentrated in arity one, applying $\Lambda$ gives
$\Lambda   (\dAs)^{\antishriek}\cong \mcDx^{\antishriek}\circ  \Lambda (\As)^{\antishriek}$.
It thus seems natural that a  $\Lambda(\dAs)^{\antishriek}$-coalgebra should correspond to a coassociative
coalgebra (coming from the $ \Lambda (\As)^{\antishriek}$-coalgebra structure),
plus a compatible linear map (coming from the $ \mcDx^{\antishriek}$-coalgebra structure). This works out as follows.
\medskip

Consider triples $(C,\Delta, f)$ where $(C, \Delta)$ is a conilpotent coassociative coalgebra
and  $f:C\to C$ is a linear map of bidegree $(1,1)$ satisfying   $(f\otimes 1)\Delta=(1\otimes f)\Delta=\Delta f$.
A morphism between two such triples is a morphism of coalgebras commuting with the given linear maps.

\begin{proposition}
\label{prop:dAscoalg}
Cooperadic suspension gives rise to an isomorphism of categories between the
category of conilpotent coalgebras over the cooperad  $(\dAs)^{\antishriek}$ and the category
of triples $(C,\Delta, f)$ as above.

 An operadic coderivation  of bidegree $(0,1)$ of a  
$(\dAs)^{\antishriek}$-coalgebra $s^{-1}C$ corresponds
on $(C,\Delta, f)$ to a coderivation of bidegree $(0,1)$ of the coalgebra $C$, anti-commuting with the linear map $f$.
\end{proposition}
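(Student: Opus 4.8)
The plan is to deduce both assertions from the two cases already in hand --- the classical case of Proposition~\ref{prop:ascoalg} and the dual numbers case --- by exploiting the collection-level decomposition $\Lambda(\dAs)^{\antishriek}\cong\mcDx^{\antishriek}\circ\Lambda\As^{\antishriek}$ recorded above, together with $\Lambda\As^{\antishriek}=\As^*$. First I would pass through cooperadic suspension (Section~\ref{subsec:coopsusp}): since $V$ is an $\mathcal{R}$-coalgebra if and only if $sV$ is a $\Lambda\mathcal{R}$-coalgebra, and this is an isomorphism of categories, it suffices to identify $\Lambda(\dAs)^{\antishriek}$-coalgebras with triples $(C,\Delta,f)$, where $C=s(s^{-1}C)$ carries the $\Lambda(\dAs)^{\antishriek}$-structure corresponding to the $(\dAs)^{\antishriek}$-coalgebra $s^{-1}C$.

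Next I would make the cofree coalgebra explicit. Because $\mcDx^{\antishriek}$ is concentrated in arity one, the plethysm collapses to $\Lambda(\dAs)^{\antishriek}(n)\cong\kfield[x]\otimes\As^*(n)$, so on a bigraded module $C$ the cofree coalgebra is $\Lambda(\dAs)^{\antishriek}(C)\cong\kfield[x]\otimes\overline{T}^c(C)$, using $\As^*(C)\cong\overline{T}^c(C)$ (which follows from $\Lambda\As^{\antishriek}=\As^*$ and the formula $(\Lambda\mathcal{R})(sV)\cong s\mathcal{R}(V)$). A structure map is then a family $\rho=\sum_{i\geq 0,\,n\geq 1}x^i\otimes\rho_{i,n}$ with $\rho_{i,n}\colon C\to C^{\otimes n}$. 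Counitality forces $\rho_{0,1}=\mathrm{id}$; the $x^0$ components $\{\rho_{0,n}\}$ constitute the underlying $\As^*$-coalgebra structure and, by Proposition~\ref{prop:ascoalg}, are the iterates of a single coassociative comultiplication $\Delta:=\rho_{0,2}$; the arity-one components $\{\rho_{i,1}\}$ constitute the underlying $\mcDx^{\antishriek}$-comodule structure and, by the dual numbers discussion, satisfy $\rho_{i,1}=f^{i}$ for a map $f:=\rho_{1,1}$ of bidegree $(1,1)$. This produces the pair $(\Delta,f)$, and conversely any such pair determines $\rho$ once the mixed components are pinned down.

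The main obstacle is precisely those mixed components and the compatibility they impose, since $\Lambda(\dAs)^{\antishriek}\cong\mcDx^{\antishriek}\circ\Lambda\As^{\antishriek}$ holds only on underlying collections: the cooperad comultiplication is twisted by the dual of the distributive law of~\cite[Lemma 2.6]{LRW13}. I would compute this comultiplication on the arity-two component carrying a single factor of $x$, which by Koszulity of $\dAs$ is one-dimensional. Coassociativity of $\rho$ then computes the single component $\rho_{1,2}$ through the three available cooperadic decompositions of that generator --- the $x$-part outer with $\Delta$ inner, or $\Delta$ outer with the $x$-part on either input --- and, exactly as coassociativity of $\Delta$ arises classically from the two decompositions of the one-dimensional $\As^{\antishriek}(3)$, each route must equal $\rho_{1,2}$. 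Read off on $C$ this yields precisely $\Delta f=(f\otimes 1)\Delta=(1\otimes f)\Delta$, the stated compatibility; the crux is checking, via the Koszul sign rule $|(r,s)||(r',s')|=rr'+ss'$, that all three expressions carry the same sign. The higher mixed $\rho_{i,n}$ are then determined inductively from $\Delta$ and $f$, so no further conditions arise, giving the bijection on objects. Functoriality is routine: a morphism of $\Lambda(\dAs)^{\antishriek}$-coalgebras is a coalgebra map commuting with the $x$-coaction, i.e.\ a coalgebra morphism commuting with $f$.

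For the coderivation statement I would transport an operadic coderivation of $s^{-1}C$ to $C$ along the suspension isomorphism, which preserves bidegree $(0,1)$ since $s$ has bidegree $(0,-1)$, so conjugation by $s$ leaves the bidegree unchanged. The coderivation condition over the cooperad $\Lambda(\dAs)^{\antishriek}$ splits along the same two factors: its $\As^*$-part is, by Proposition~\ref{prop:ascoalg}, the condition that $d$ be a coderivation of $(C,\Delta)$, while its $\mcDx^{\antishriek}$-part is the dual numbers coderivation condition already computed, namely $df=(-1)^{r+s}fd$, which for bidegree $(r,s)=(0,1)$ reads $df=-fd$. Combining the two gives a bidegree-$(0,1)$ coderivation of $C$ anti-commuting with $f$, as claimed.
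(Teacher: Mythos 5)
Your proposal is correct and follows essentially the same route as the paper's proof: pass to $\Lambda(\dAs)^{\antishriek}$-coalgebras via cooperadic suspension, identify $\Lambda(\dAs)^{\antishriek}(C)$ with $\kfield[x]\otimes\overline{T}^c(C)$, extract $\Delta=\rho_{0,2}$ and $f=\rho_{1,1}$, obtain the compatibility relations from coassociativity at the one-dimensional $\alpha_{12}$-component (the paper's Example~\ref{Ex:signs}), and note that all higher components $\rho_{i,j}$ are forced to equal $\pm\Delta^{(j-1)}f^{i}$, with the coderivation statement handled by splitting into the coalgebra and dual-numbers conditions. The only cosmetic difference is that you invoke Proposition~\ref{prop:ascoalg} and the dual-numbers discussion for the pure components, where the paper computes directly from the explicit signs of Corollary~\ref{cor:lambda}.
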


\begin{proof}
We will see that a triple $(C,\Delta, f)$ as above corresponds to a  $(\dAs)^{\antishriek}$-coalgebra
structure on $s^{-1}C$, or equivalently to a  $\Lambda(\dAs)^{\antishriek}$-coalgebra
structure on $C$.

The cooperad structure of $(\dAs)^{\antishriek}$ is given explicitly in~\cite[Proposition 2.7]{LRW13} 
and the corresponding structure of $\Lambda(\dAs)^{\antishriek}$ is given in the appendix;
 see Corollary~\ref{cor:lambda}. 
In particular, as a $\kfield$-module, it is free on generators $\alpha_{uv}$, with bidegree $(-u,-u)$.

It follows that we can identify $\Lambda(\dAs)^{\antishriek}(C)$ with $\kfield[x]\otimes \overline{T}^c(C)$,
where, for $a\in C^{\otimes v}$,
 $\alpha_{uv}\otimes a \in  \Lambda(\dAs)^{\antishriek}(C)$
is identified with $x^u\otimes a \in \kfield[x]\otimes \overline{T}^c(C)$. 
That is, we have
\[
\Lambda(\dAs)^{\antishriek}(C)=\oplus_v\Lambda(\dAs)^{\antishriek}(v)\otimes C^{\otimes v}
=\oplus_{u,v}\kfield \alpha_{uv}\otimes C^{\otimes v}\cong \oplus_v k[x]\otimes C^{\otimes v}.
\]

Let $C$ be a  coalgebra for the cooperad  $\Lambda(\dAs)^{\antishriek}$, with coaction
	\[
	\rho: C\to \Lambda(\dAs)^{\antishriek}(C)=\kfield[x]\otimes \overline{T}^c(C).
	\]
Write $\rho_{i,j}: C\to  C^{\otimes j}$ for the following composite
\[
\rho_{i,j} \colon \xymatrix{ C \ar[r]^-{\rho} & \kfield[x]\otimes \overline{T}^c(C) \ar@{>>}[r] & 
\kfield x^i \otimes C^{\otimes j} \ar[r]^-{\cong} & C^{\otimes j}}.
\]
Define $\Delta=\rho_{0,2}: C\to  C\otimes C$ and $f=\rho_{1,1}: C\to  C$.
Then, using coassociativity of the coaction and the computation in Example \ref{Ex:signs}, 
one can check that $\Delta$ is coassociative (essentially as in the classical case)
and that 
	\[
	-\rho_{1,2}=(f\otimes 1)\Delta=(1\otimes f)\Delta=\Delta f.
	\]
More generally, one has $\rho_{i,j}=(-1)^{i(j+1)}\Delta^{(j-1)}f^i$. Thus the 
 $\Lambda(\dAs)^{\antishriek}$-coalgebra structure is completely determined by $\Delta$ and $f$.

On the other hand, given a triple $(C,\Delta, f)$ as above, we can define 
 	\[
	\rho_{i,j}=(-1)^{i(j+1)}\Delta^{(j-1)}f^i
	\]
and let $\rho:C\to  (\dAs)^{\antishriek}(C)$ be the
corresponding map. Using the fact that $(f\otimes 1)\Delta=(1\otimes f)\Delta=\Delta f$,
we see that $\rho_{i,j}=(-1)^{i(j+1)}(f^i\otimes 1^{j-i})\Delta^{(j-1)}$ and with this relation
we can check that $\rho$ does make $C$ into a  $\Lambda(\dAs)^{\antishriek}$-coalgebra.

It is straightforward to check the statement about coderivations; we get a coderivation
of the coalgebra as in the classical case, together with compatibility with $f$.
\end{proof}

\begin{example} \label{ex:FreedAsCoalg}
As an example we will compute operadic coderivations of the cofree
$\Lambda(\dAs)^{\antishriek}$-coalgebra cogenerated by $C$. From the proof of Proposition~\ref{prop:dAscoalg} 
we have $\Lambda(\dAs)^{\antishriek}(C)\cong k[x]\otimes \overline T^c(C)$.

From the cooperad structure given in Corollary~\ref{cor:lambda},
the coalgebra structure is given by
	\[
	\Delta(x^i\otimes a_1\otimes \cdots \otimes a_n)
	=\sum_{k=1}^{n-1}\sum_{r+s=i} (-1)^\epsilon 
		(x^r\otimes  a_1\otimes \cdots \otimes a_k)\otimes (x^s\otimes  a_{k+1}\otimes \cdots \otimes a_n),
	\]
where $\epsilon= rn+ik+(s,s)(|a_1|+\cdots + |a_k|)$.

Note that if one denotes by $\pi_0$ the projection of $\kfield[x]\otimes \overline{T}^c(C)$ onto 
$\kfield x^0\otimes\overline{T}^c(C)\cong \overline{T}^c(C)$ then
$$\Delta\pi_0=(\pi_0\otimes\pi_0)\Delta$$ where the first $\Delta$ is the usual deconcatenation product defined on $\overline{T}^c(C)$.

 The linear map
	\[
	f:\kfield[x]\otimes \overline{T}^c(C)\to \kfield[x]\otimes \overline{T}^c(C)
	\] 
is  determined by $f(x^n\otimes a)=(-1)^{j+1} x^{n-1}\otimes a$, for $a\in C^{\otimes j}$.
\smallskip

Now an operadic coderivation of bidegree $(0,1)$ is  a coderivation
of the coalgebra $\kfield[x]\otimes \overline{T}^c(C)$,
anti-commuting with the map $f$.
Let $d:\kfield[x]\otimes \overline{T}^c(C) \to \kfield[x]\otimes \overline{T}^c(C)$ and write
	\[
	d(x^n\otimes a)=\sum_{i} x^i\otimes d^{n,i}(a),
	\] 
where $d^{n,i}: \overline{T}^c(C)\to \overline{T}^c(C)$ and $a\in C^{\otimes j}$. 

Write $d^{n,i}(a)=\sum_k d^{n,i,k}(a)$ with $ d^{n,i,k}(a)\in C^{\otimes k}$.
Then anti-commuting with $f$ means that
	\[
	d^{n,i,k}(a)=(-1)^{j+k+1} d^{n-1, i-1,k}(a),
	\] 
where $a\in C^{\otimes j}$ and hence that
	\[
	d^{n,i,k}(a)=(-1)^{i(j+k+1)} d^{n-i,0,k}(a)
	\]
for $i\leq n$ and $d^{n,i,k}=0$ for $i>n$. 
So $d$ is completely determined by the family of maps $d^{n,0,k}$.

Define $\delta^n:  \overline{T}^c(C) \to  \overline{T}^c(C)$ 
by $\delta^n(a)=(-1)^{nj}d^{n,0}(a)=(-1)^{nj}\pi_0d(x^n\otimes a)$,  where $a\in C^{\otimes j}$.

The coderivation condition for $d$ makes each $\delta^{n}$ a coderivation of  $\overline{T}^c(C)$. 
Indeed one can check that for $a\in C^{\otimes j}$,
\begin{multline*}
\Delta\delta^n(a)=(-1)^{nj}\Delta\pi_0 d(x^n\otimes a)=\\
(-1)^{nj}\pi_0\otimes\pi_0(d\otimes 1+1\otimes d)
\Delta(x^n\otimes a)=(\delta^n\otimes 1+1\otimes\delta^n)\Delta(a).
\end{multline*}

So
 we obtain a family of coderivations $\delta^n$ on $\overline{T}^c(C)$ of bidegree $(-n,1-n)$.
\end{example}

Using this we have  an operadic explanation of the following formulation
of a derived $A_\infty$-algebra structure; this is part of~\cite[Lemma 4.1]{Sag10}.

\begin{proposition}\label{prop:dAInfAlgAsTw}
A derived $A_\infty$-algebra structure on a bigraded $\kfield$-module $A$ is equivalent to
specifying a family of coderivations $\overline{T}^c(sA)\to  \overline{T}^c(sA)$ making
$\overline{T}^c(sA)$ into a twisted chain complex.
\end{proposition}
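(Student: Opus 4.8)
The plan is to combine the general infinity-algebra principle quoted from \cite[10.1.13]{LV12} with the explicit coderivation calculation of Example~\ref{ex:FreedAsCoalg}. By the cited result applied to $\mathcal{P}=\dAs$, a $(\dAs)_\infty$-algebra structure on $A$ is the same as a square-zero coderivation of bidegree $(0,1)$ on the cofree $(\dAs)^{\antishriek}$-coalgebra $(\dAs)^{\antishriek}(A)$. By Theorem~\cite[Theorem 3.2]{LRW13} the left-hand side is exactly a derived $A_\infty$-algebra structure, so it suffices to match the right-hand side with the stated twisted-chain-complex data on $\overline{T}^c(sA)$.

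First I would pass through the cooperadic suspension isomorphism of Proposition~\ref{prop:dAscoalg}: a square-zero coderivation of bidegree $(0,1)$ on the $(\dAs)^{\antishriek}$-coalgebra $s^{-1}C$ corresponds to a coderivation of bidegree $(0,1)$ on the triple $(C,\Delta,f)$ that anti-commutes with $f$, where here $C=\overline{T}^c(sA)$. Then I would invoke the computation of Example~\ref{ex:FreedAsCoalg}: such a coderivation $d$ on $\kfield[x]\otimes\overline{T}^c(sA)$, anti-commuting with $f$, is completely determined by the maps $d^{n,0,k}$, and packaging these via $\delta^n(a)=(-1)^{nj}\pi_0 d(x^n\otimes a)$ produces a family of coderivations $\delta^n$ of $\overline{T}^c(sA)$ of bidegree $(-n,1-n)$. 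This is precisely the bidegree data of the differentials $d_n$ in a twisted chain complex (Definition~\ref{def:twistedchaincx}), after accounting for the single vertical shift coming from $sA$.

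The remaining task is to verify that the single condition ``$d$ is a square-zero coderivation'' translates into the full twisted chain complex relations $\sum_{i+p=u}(-1)^i\delta^i\delta^p=0$ on the family $\{\delta^n\}$. Here I would expand $d^2=0$ in terms of the components $d^{n,i,k}$, use the relation $d^{n,i,k}(a)=(-1)^{i(j+k+1)}d^{n-i,0,k}(a)$ from Example~\ref{ex:FreedAsCoalg} to reduce everything to the $d^{n,0,k}$, and collect the terms of each total power $x^u$; the coefficient of $x^u\otimes(-)$ in $d^2$ should reorganise, after inserting the sign-twisting definition of $\delta^n$, into the multicomplex identity for the appropriate index $u$. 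Conversely, a family of coderivations $\delta^n$ satisfying the twisted chain complex relations reassembles into such a $d$ by setting $d(x^n\otimes a)=\sum_{i\le n}(-1)^{\ast}x^{n-i}\otimes\delta^i(a)$ for the appropriate sign, and one checks this $d$ is a coderivation anti-commuting with $f$ and squares to zero.

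The main obstacle I anticipate is purely bookkeeping: tracking the Koszul signs through the two suspensions (the vertical shift $sA$ built into the classical $\As$-picture, plus the cooperadic $\Lambda$-suspension) so that the signs $(-1)^{i(j+k+1)}$, $(-1)^{nj}$, and the anti-commutativity factor $(-1)^{r+s}$ from the dual-numbers part combine to yield exactly the sign $(-1)^i$ appearing in Definition~\ref{def:twistedchaincx}. I expect no conceptual difficulty beyond this, since the structural dictionary (Koszul duality gives square-zero coderivation; Proposition~\ref{prop:dAscoalg} and Example~\ref{ex:FreedAsCoalg} decompose that coderivation) is already in place; the verification is a careful matching of Sagave's conventions in \cite[Lemma 4.1]{Sag10} against the signs produced by our cooperadic computation.
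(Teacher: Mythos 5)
Your proposal is correct and follows essentially the same route as the paper: the general principle of \cite[10.1.13]{LV12} applied to $\mathcal{P}=\dAs$, the decomposition of a coderivation into the family $\delta^n$ of bidegree $(-n,1-n)$ from Example~\ref{ex:FreedAsCoalg}, and the translation of $d^2=0$ into the relations $\sum_{i+p=u}(-1)^i\delta^i\delta^p=0$. The one place the paper is more economical than your plan is the converse direction: rather than expanding every $x^u$-component of $d^2$ and checking each one, it observes (via \cite[6.3.8]{LV12}) that $d^2$ is determined by its projection to $A$ inside the $x^0$ part of the cofree coalgebra, so the vanishing of the single family of relations already forces $d^2=0$.
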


\begin{proof}
As recalled above, a 
$\mathcal{P}_\infty$-algebra structure on $A$ is equivalent to a square-zero
coderivation on the $\mathcal{P}^{\antishriek}$-coalgebra $\mathcal{P}^{\antishriek}(A)$.
Applying this to the example $\mathcal{P}=\dAs$, and with $A=s^{-1}C$, 
we see that a coderivation $d$
of  $(\dAs)^{\antishriek}(A)$ corresponds
to a family of coderivations $\delta^{n}$ on $\overline{T}^c(sA)$ of bidegree $(-n,1-n)$.

Now one can check that if we further impose the condition $d^2=0$ on the
map  $\mathcal{P}^{\antishriek}(A)\to  \mathcal{P}^{\antishriek}(A)$, this corresponds to saying that 
the maps $\delta^{n}$ make $\overline{T}^c(sA)$ into
a twisted chain complex.

In more detail, with $a\in C^{\otimes j}$ and using the same notation as in Example~\ref{ex:FreedAsCoalg},
	\[
	d^2(x^n\otimes a)=\sum_{r,s} x^s\otimes d^{r,s}d^{n,r}(a).
	\]
In particular, considering $s=0$, we see that $d^2=0$ implies:
	\begin{align*}
	\sum_r d^{r,0}d^{n,r}(a)=0 
		&\Leftrightarrow \sum_r\sum_k (-1)^{r(j+k+1)} d^{r,0}d^{n-r,0,k}(a)=0\\
		&\Leftrightarrow \sum_r \sum_k (-1)^{r(j+k+1)+rk+(n-r)j} \delta^r \delta^{n-r,k}(a)=0\\
		&\Leftrightarrow \sum_r (-1)^{r+nj} \delta^r \delta^{n-r}(a)=0\\
		&\Leftrightarrow (-1)^{nj} \sum_r  (-1)^r  \delta^r\delta^{n-r}(a)=0.\\
	\end{align*}
 
Thus $d^2=0$ implies the twisted chain complex conditions $\sum_r  (-1)^r  \delta^r\delta^{n-r}(a)=0$ on the
maps $\delta^r$.

Furthermore, by~\cite[6.3.8]{LV12}, $d^2$ is completely determined by its projection to 
$A$ included in the $x^{0}$ part and it follows that the condition $d^2=0$ holds if and only if
the maps $\delta^r$ satisfy the twisted chain complex conditions.
\end{proof}

\section{Representations of derived $A_{\infty}$-algebras}
\label{sec:representations}
The aim of this section is to study representations of $\dAs_{\infty}$-algebras. We establish some general results on coderivations of representations of coalgebras and then show that representations of 
homotopy algebras correspond to square-zero coderivations on a certain cofree object. We then use these results to describe $\dAs_{\infty}$-representations in terms of a twisted chain complex of coderivations on the tensor coalgebra. 
Thus we obtain a description of $\dAs_{\infty}$-representations similar in spirit to Proposition~\ref{prop:dAInfAlgAsTw}.

\subsection{Coderivations on representations of coalgebras}

One way to describe $\mcP_{\infty}$-structures is via coderivations on cofree coalgebras. We will see that analogously $\mcP_{\infty}$-representations can be described via coderivations on cofree representations of coalgebras, which we will introduce now.  We work in the category $\BCv$ of vertical bicomplexes.

\begin{definition} \label{defn:InfCompProd}
Let $X$ and $Y$ be vertical bicomplexes and let $\mcM$ be a collection in $\BCv$. The vertical bicomplex $\mcM(X;Y)$ is given by 
$$\mcM(X;Y) = \bigoplus_{n\geq 1} \mcM(n) \otimes \bigl(\bigoplus_{a+b+1=n} X^{\otimes a} \otimes Y \otimes X^{\otimes b}\bigr).$$
If $f\colon \mcM \ra \mcM'$ is a map of collections and $g\colon X \ra X'$ and $h\colon Y \ra Y'$ are maps of vertical bicomplexes, the map 
$$f(g;h)\colon \mcM(X;Y) \ra \mcM'(X';Y')$$
 is defined as the direct sum of the maps $ f(a+b+1) \otimes g^{\otimes a} \otimes h \otimes g^{\otimes b}$.
\end{definition}

\begin{remark}\rm
In this section for convenience we drop the symbol $\circ$ for plethysm of collections and
just write $\mathcal{C}\mathcal{C}$ for $\mathcal{C}\circ \mathcal{C}$.

One has to be careful when working with $\mcM(X;Y)$. For example if $\mcN$ is another collection, in general
$$(\mcM \mcN)(X;Y) \ncong \mcM (\mcN(X;Y)).$$
However it is true that $(\mcM \mcN)(X;Y) \cong \mcM(\mcN(X); \mcN(X;Y))$ and we will make frequent use of this.
\end{remark}

Dual to the notion of representation (see e.g.\ \cite{Fr09}) of an algebra over an operad is the notion of representation of a coalgebra over a cooperad. In the following let $(\mcC, \Delta, \epsilon)$ be a cooperad and let $C$ be a $\mcC$-coalgebra with coalgebra structure map $\rho \colon C \ra \mcC (C)$.

\begin{definition} A bigraded module $E$ is called a \emph{representation of $C$ over $\mcC$} if there is a map 
$$\omega \colon E \ra \mcC(C;E)$$
such that the diagrams
$$\xymatrix{
E \ar[r]^-{\omega} \ar[d]_-{\omega} & \mcC(C;E) \ar[d]_-{\Delta}\\
\mcC(C;E) \ar[r]^-{\mcC(\rho; \omega)} & \mcC( \mcC(C); \mcC(C;E)) \cong (\mcC \mcC)(C;E)
}
 \quad \text{and} \quad 
\xymatrix{
E \ar[r]^-{\omega} \ar@{=}[rd] & \mcC(C;E) \ar[d]_-{\epsilon}\\
& E
}
$$
commute.
\end{definition}

\begin{example}
The example we will be primarily interested in is the following cofree representation. Let $C= \mcC(X)$ be the cofree 
$\mcC$-coalgebra cogenerated by $X$. Then to a bigraded module $Y$ we can associate the 
representation $\mcC(X;Y)$. The structure map is given by the comultiplication on $\mcC$, i.e.\ 
$$ \mcC(X; Y ) \ra (\mcC \mcC)(X;Y) \cong \mcC( \mcC(X); \mcC(X;Y)).$$
Over an arbitrary $\mcC$-coalgebra, cofree representations are not that simple, 
see for instance the result on free representations in \cite[4.3.2]{Fr09}.
\end{example}

\begin{remark}\rm
 In \cite[4.3]{Fr09}, Fresse defines the enveloping algebra $U_{\mathcal{P}}(A)$ of an algebra $A$ over an operad $\mathcal P$, so that left modules over
 $U_{\mathcal P}(A)$ are precisely representations of the $\mathcal P$-algebra $A$. This enveloping algebra is obtained as the space of unary operations
 of the enveloping operad. In \cite{Yalin13}, Yalin defines the notion of an enveloping cooperad associated to a coalgebra $C$ over a cooperad $\mathcal C$.
 Similarly to Fresse, if one takes the space of unary operations of this enveloping cooperad one gets the enveloping coalgebra $U_{\mcC}(C)$ so that
 left comodules over $U_{\mcC}(C)$ are precisely representations of the $\mcC$-coalgebra $C$. However, the constructions of Fresse and Yalin are performed 
 in the category of symmetric (co)operads. In this paper we are dealing with non-symmetric (co)operads. But, 
 the constructions of Yalin can be adapted to our case if one considers first the collection
 $$\mcC[C](n)=\bigoplus_{r\geq 0} \mcC(n+r)\otimes (C\oplus\kfield \rho)^{n+r}_n$$
 where $\rho$ is a ``silent'' variable of weight $1$ 
 and $(C\oplus\kfield \rho)^{n+r}_n$ is the component of $(C\oplus\kfield \rho)^{n+r}$ of weight $n$. This collection forms a cooperad and the enveloping cooperad
 is a subcooperad of $\mcC[C]$ obtained as an equalizer like in \cite[2.2]{Yalin13}.
\end{remark}

Next we will define what a coderivation of a representation is. To do this we need to extend the infinitesimal composite $\circ'$ of maps as defined in \cite[6.1.3]{LV12}.

\begin{definition}
Let $\mcM$, $X$ and $Y$ be as in Definition~\ref{defn:InfCompProd}. For $g\colon X \ra X$ and $h\colon Y \ra Y$ the map 
$$\mcM \circ''(g;h) \colon \mcM(X;Y) \ra \mcM(X;Y)$$ 
is defined on $\mcM(a+b+1) \otimes X^{\otimes a} \otimes Y \otimes X^{\otimes b}$ as the sum
$$ \sum_{\stackrel{i=1,}{ i\neq a+1}}^{a+b+1} \mcM \otimes 1^{\otimes i-1} \otimes g \otimes 1^{\otimes a+b+1-i} + \mcM \otimes 1^{\otimes a} \otimes h \otimes 1^{\otimes b} 
$$
with $1$ denoting either the identity on $X$ or $Y$.
\end{definition}

Let $d_{\mcC}$ denote the (vertical) differential of the cooperad $\mcC$, $(C, \rho)$ a $\mcC$-coalgebra in bigraded modules equipped with a coderivation $\partial_C$ and $(E,\omega)$ a bigraded module equipped with a map $\omega$ making it a representation of $C$.
\begin{definition} 
A map $g\colon E \ra E$ is called a coderivation if

$$\xymatrix{
 E \ar[rr]^-{g} \ar[d]_{\omega} &&  E \ar[d]_{\omega} \\
\mcC(C;E) \ar[rr]^*!/^2.5pt/{\labelstyle \mcC \circ'' (\partial_C;g) + d_{ \mcC}(C;E)} 
&& \mcC(C;E)
}$$

commutes.
\end{definition}

We will need analogues of well known  results for coderivations on coalgebras. To simplify formulas we encode coderivations via a distributive law; see \cite{Beck69}. 

\begin{definition}
Let $(\mcP, \gamma, \eta)$ be an operad and $(\mcC,  \Delta, \epsilon)$ a cooperad. A mixed distributive law is a morphism of collections
$$\beta  \colon \mcP \mcC \ra \mcC \mcP$$
such that the diagrams
$$\xymatrix{
\mcP \mcP \mcC \ar[rr]^-{\gamma \mcC} \ar[d]_-{\mcP \beta} && \mcP \mcC \ar[d]_-{\beta}\\
\mcP \mcC \mcP \ar[r]^-{\beta \mcP} &\mcC \mcP \mcP \ar[r]^-{\mcC \gamma} & \mcC \mcP
}
\quad\quad
\xymatrix{
\mcP \mcC \ar[r]^-{\mcP \Delta} \ar[d]_{\beta} & \mcP \mcC \mcC \ar[r]^-{\beta \mcC} & \mcC \mcP \mcC \ar[d]_-{\mcC \beta}\\
\mcC \mcP \ar[rr]^-{\Delta \mcP} && \mcC \mcC \mcP
}$$
$$
\xymatrix{
\mcC \ar[rd]_-{\mcC \eta}\ar[r]^-{\eta \mcC} & \mcP \mcC \ar[d]_-{\beta}\\
& \mcC \mcP
}
\quad\quad
\xymatrix{
\mcP \mcC \ar[rd]^-{\mcP\epsilon} \ar[d]_{\beta}& \\
\mcC \mcP \ar[r]^-{\epsilon \mcP}& \mcP
}
$$
commute.
\end{definition}

The operad $(\mcDy, \gamma_{\mcDy}, \eta_{\mcDy})$ that will help us to describe 
coderivations is the operad freely generated by a single unary operation $y$. In all of our examples $y$ 
will be of bidegree $(0,1)$.

\begin{definition}
We define a distributive law 
$$\beta \colon \mcDy\mcC \ra \mcC \mcDy$$
by requiring that
$$\beta(y;c) = \sum_{i=1}^{n} (-1)^{\vert c \vert \vert y \vert}  c;1^{\otimes i-1}\otimes y 
\otimes 1^{\otimes n-i} + d_{\mcC}(c); 1^{\otimes n}$$
for $c \in \mcC(n)$.
\end{definition}

Since $\mcDy$ is freely generated  we need to check  only that
$$\xymatrix{
\kfield y \otimes \mcC \subset \mcDy  \mcC \ar[d]_-{\beta} \ar[r]^-{\mcDy \Delta} & \mcDy \mcC   
\mcC \ar[r]^-{\beta\mcC} & \mcC \mcDy \mcC   \ar[d]_-{\mcC \beta}\\
 \mcC \mcDy \ar[rr]^-{\Delta \mcDy}&  & \mcC \mcC \mcDy  
}$$
commutes and that $\epsilon \mcDy(\beta(y;c))= y(\epsilon c)$, which can be easily calculated. The other 
two defining conditions of a mixed distributive law determine $\beta$ on the whole of $\mcDy \mcC$. 

It is possible to characterise coderivations via $\beta$. Since a coderivation on a representation depends on the coderivation on the coalgebra we state the corresponding result for coalgebras.

\begin{proposition}\label{prop:CoderViaDCoalg}
Giving a coderivation on a $\mcC$-coalgebra $(C, \rho)$ is equivalent to defining a $\mcDy$-algebra structure $\gamma_C$ on $C$ such that
$$
\xymatrix{
\mcDy (C) \ar[r]^-{\mcDy \rho} \ar[dd]_-{\gamma_C}&   \mcDy\mcC(C) \ar[d]_-{\beta C} \\
&\mcC\mcDy (C) \ar[d]_-{\mcC \gamma_C} \\
C \ar[r]^{\rho}  &  \mcC(C)
}
$$
commutes. Explicitly, the coderivation defined by $\gamma_C$ is $\gamma_C(y) $.
\end{proposition}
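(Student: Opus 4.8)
The plan is to exploit that $\mcDy$ is the free operad on a single unary operation $y$, which makes a $\mcDy$-algebra structure on $C$ very rigid. Indeed, giving a map $\gamma_C\colon \mcDy(C)\to C$ is the same as giving the single linear map $g:=\gamma_C(y)\colon C\to C$ of bidegree $(0,1)$; the action of the higher operations is then forced to be $\gamma_C(y^n)=g^n$. So the correspondence on underlying data is immediate: both a coderivation and a $\mcDy$-algebra structure amount to the choice of one map $g\colon C\to C$ of bidegree $(0,1)$. The real content of the proposition is that the coderivation condition on $g$ is equivalent to commutativity of the displayed square, and this is what I would verify.

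First I would observe that every arrow in the square is a map out of the free object $\mcDy(C)$, so by freeness it suffices to test commutativity on generators, namely on $1\otimes c$ and on $y\otimes c$ for $c\in C$. On $1\otimes c$ both composites return $\rho(c)$ (using the counit conditions for $\beta$), so that case is automatic. The remaining case $y\otimes c$ is exactly the place where the distributive law $\beta$ was only specified on $y$; the fact that testing on this generator suffices is precisely the Beck distributive-law formalism, the other defining diagrams of $\beta$ guaranteeing that commutativity propagates to all of $\mcDy(C)$.

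Next I would compute the two composites on $y\otimes c$. Writing $\rho(c)=\sum \nu\otimes(c_1\otimes\cdots\otimes c_n)$ with $\nu\in\mcC(n)$, the composite $\rho\gamma_C$ gives $\rho(g(c))$. For the other route, $\mcDy\rho$ sends $y\otimes c$ to $y;\rho(c)$; applying $\beta C$ via the explicit formula $\beta(y;\nu)=\sum_{k}(-1)^{|\nu||y|}\nu;(1^{\otimes k-1}\otimes y\otimes 1^{\otimes n-k})+d_{\mcC}(\nu);1^{\otimes n}$ moves $y$ into each of the $n$ slots and produces the differential term; finally $\mcC\gamma_C$ replaces each inserted $y$ by $g$ and each $1$ by the identity. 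The result is $\sum_k \pm\,\nu\otimes(c_1\otimes\cdots\otimes g(c_k)\otimes\cdots\otimes c_n)+d_{\mcC}(\nu)\otimes(c_1\otimes\cdots\otimes c_n)$, which is exactly the right-hand side of the defining condition of a coderivation of the $\mcC$-coalgebra $(C,\rho)$ evaluated at $c$. Hence the square commutes on $y\otimes c$ if and only if $\rho g$ equals this expression, i.e.\ $g=\gamma_C(y)$ is a coderivation; this gives the claimed equivalence.

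The step I expect to be the main obstacle is bookkeeping the Koszul signs: one must check that the sign $(-1)^{|\nu||y|}$ in $\beta$, together with the signs incurred when $y$ (of bidegree $(0,1)$) is transported past the factors $c_1,\dots,c_{k-1}$ inside the plethysm, reproduces precisely the signs built into the coderivation condition through the infinitesimal composite and $d_{\mcC}(C)$. Once the signs are matched on the generator $y\otimes c$, no further sign analysis is needed, since the extension to all of $\mcDy(C)$ is governed by the already-verified distributive-law axioms.
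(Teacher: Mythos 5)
Your overall strategy coincides with the paper's: the paper in fact omits the proof of this proposition, saying it is analogous to the proof it gives for representations (Proposition~\ref{prop:CoderRepViaD}), and that proof has exactly your skeleton --- freeness of $\mcDy$ reduces the $\mcDy$-algebra structure to the single map $g=\gamma_C(y)$, the square is trivially commutative on $I(C)\subset\mcDy(C)$, and on $\kfield y\otimes C$ it unwinds (Koszul signs included, exactly as you describe) to the statement that $g$ is a coderivation of $(C,\rho)$.

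The step you treat as automatic, however, is the one the paper spends essentially all of its proof on, and your stated justification for it would fail. The claim ``every arrow in the square is a map out of the free object $\mcDy(C)$, so by freeness it suffices to test commutativity on generators'' is not valid: as a $\kfield$-module $\mcDy(C)=\bigoplus_{n\geq 0}\kfield y^n\otimes C$, the two composites in the square are not $\mcDy$-algebra morphisms out of a free $\mcDy$-algebra in any sense that freeness could be applied to, and checking the cases $n=0,1$ says nothing a priori about $y^n\otimes c$ for $n\geq 2$. What actually makes the propagation work is an induction on powers of $y$: writing $\mcDy_n$ for the span of $\{y^i,\ i\leq n\}$, one shows that commutativity restricted to $\mcDy_n$ and to $\mcDy_m$ implies commutativity on $\mcDy_{n+m}$. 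This induction is not a consequence of the defining diagrams of $\beta$ alone --- it uses, in addition, naturality, the distributive-law axiom $\beta(\gamma_{\mcDy}\mcC)=(\mcC\gamma_{\mcDy})(\beta\mcDy)(\mcDy\beta)$, \emph{and} the associativity of the algebra structure $\gamma_C$ (this is how $\gamma_C(y^{n+m}\otimes c)=\gamma_C\bigl(y^n\otimes\gamma_C(y^m\otimes c)\bigr)=g^{n+m}(c)$ enters). You gesture at the right ingredients when you invoke ``the other defining diagrams of $\beta$,'' but the inductive diagram chase is the real content of the proof and needs to be carried out (or explicitly cited); it is written out in full in the paper's proof of the representation analogue, Proposition~\ref{prop:CoderRepViaD}.
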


We omit the proof of this proposition since it is analogous to the proof of the result for representations which we will state and prove now. 
Again let $\gamma_C \colon \mcDy C \ra C$ correspond to the coderivation $\partial_C$.
 
Observe that since $\mcDy$ is concentrated in arity one we have
$$(\mcDy \mcC)(C;E) \cong \mcDy (\mcC(C;E)) \quad \text{as well as} \quad (\mcC \mcDy)(C;E) \cong \mcC(\mcDy(C); \mcDy(E)).$$

\begin{proposition} \label{prop:CoderRepViaD}
Giving a coderivation on $E$ is equivalent to giving a $\mcDy$-algebra structure map $ \gamma_E \colon \mcDy (E) \ra E$
such that
\begin{equation} \label{CoderRepViaD}
\xymatrix{
\mcDy (E)\ar[r]^-{\mcDy \omega} \ar[dd]_-{\gamma_E} & \mcDy (\mcC (C;E)) = (\mcDy \mcC)(C;E) \ar[d]_-{\beta (C;E)}\\ 
  & (\mcC \mcDy)(C;E) \cong \mcC(\mcDy(C);\mcDy(E)) \ar[d]_-{\mcC(\gamma_C; \gamma_E)}\\
E \ar[r]^-{\omega} & \mcC(C;E)
}
\end{equation}
commutes. The coderivation defined by $\gamma_E$ is $\gamma_E(y)$.
\end{proposition}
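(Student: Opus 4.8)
The plan is to exploit the fact that $\mcDy$ is freely generated by the single unary operation $y$, so that $\mcDy$ is concentrated in arity one with $\mcDy(1)\cong\kfield[y]$ and hence $\mcDy(E)\cong\kfield[y]\otimes E$. A $\mcDy$-algebra structure map $\gamma_E\colon\mcDy(E)\ra E$ is then completely determined by the single endomorphism $g:=\gamma_E(y)\colon E\ra E$, of bidegree $(0,1)$, via $\gamma_E(y^n\otimes e)=g^n(e)$; conversely any such $g$ defines a $\gamma_E$. Thus the data of a $\mcDy$-algebra structure on $E$ and that of a single map $g$ are interchangeable, and the asserted correspondence $g=\gamma_E(y)$ is built in. What has to be shown is that diagram~\eqref{CoderRepViaD} commutes exactly when this $g$ is a coderivation.

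First I would equip the target $\mcC(C;E)$ with the $\mcDy$-algebra structure read off from the right-hand and bottom edges of~\eqref{CoderRepViaD}, namely the composite
\[
\mcDy(\mcC(C;E))=(\mcDy\mcC)(C;E)\xrightarrow{\beta(C;E)}(\mcC\mcDy)(C;E)\cong\mcC(\mcDy(C);\mcDy(E))\xrightarrow{\mcC(\gamma_C;\gamma_E)}\mcC(C;E).
\]
With this structure, the commutativity of~\eqref{CoderRepViaD} is exactly the assertion that $\omega$ is a morphism of $\mcDy$-algebras, i.e.\ that $\omega\circ\gamma_E=\gamma_{\mcC(C;E)}\circ\mcDy(\omega)$. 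Since both $\mcDy$-algebra structures iterate a single degree-$(0,1)$ operator, writing $G$ for the action of $y$ on the target one has $\omega\circ\gamma_E(y^n\otimes e)=\omega(g^n(e))$ and $\gamma_{\mcC(C;E)}\circ\mcDy(\omega)(y^n\otimes e)=G^n(\omega(e))$, so the whole identity follows by induction from its value on the generator, $\omega g=G\omega$.

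It then remains to identify $G$ by evaluating on $y\otimes e$. Writing $\omega(e)\in\mcC(C;E)$ as a sum of terms $c;x_1\otimes\cdots\otimes e'\otimes\cdots$ with $c\in\mcC(n)$, the distributive law $\beta(y;c)=\sum_{i=1}^n(-1)^{|c||y|}\,c;1^{\otimes i-1}\otimes y\otimes 1^{\otimes n-i}+d_\mcC(c);1^{\otimes n}$ distributes $y$ over the $n$ slots and records the internal differential, and then $\mcC(\gamma_C;\gamma_E)$ replaces $y$ by $\partial_C=\gamma_C(y)$ in each $C$-slot and by $g=\gamma_E(y)$ in the unique $E$-slot. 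This produces precisely $\mcC\circ''(\partial_C;g)+d_\mcC(C;E)$ applied to $\omega(e)$, so $G=\mcC\circ''(\partial_C;g)+d_\mcC(C;E)$ and $\omega g=G\omega$ is exactly the coderivation square. Reading this computation in either direction gives both implications of the equivalence.

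The step I expect to be the main obstacle is not conceptual but a matter of careful bookkeeping: one must check that the Koszul signs carried by $\beta$ --- the factor $(-1)^{|c||y|}$, which for $c$ of bidegree $(r,s)$ equals $(-1)^{s}$ since $y$ has bidegree $(0,1)$ --- together with the sign rule built into $\circ''$ and the internal differential $d_\mcC$ assemble to the operator $\mcC\circ''(\partial_C;g)+d_\mcC(C;E)$ exactly, with no stray signs. The one genuinely structural input is that the composite defining $G$ really is a $\mcDy$-algebra structure on $\mcC(C;E)$; this is guaranteed by the compatibility squares of the mixed distributive law $\beta$ established earlier in the section, and the argument runs in close parallel to the coalgebra case of Proposition~\ref{prop:CoderViaDCoalg}.
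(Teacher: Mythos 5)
Your proposal is correct and follows essentially the same route as the paper: reduce to the generator $y$ via freeness of $\mcDy$, observe that on $\kfield y\otimes E$ the diagram is exactly the coderivation condition $\omega g=\bigl(\mcC\circ''(\partial_C;g)\bigr)\omega+d_{\mcC}\omega$, and handle higher powers of $y$ using the distributive-law pentagon together with the algebra axioms for $\gamma_C$ and $\gamma_E$. Your repackaging --- equipping $\mcC(C;E)$ with the $\mcDy$-algebra structure $\mcC(\gamma_C;\gamma_E)\circ\beta(C;E)$ and reading the diagram as ``$\omega$ is a morphism of $\mcDy$-algebras'' --- is just a cleaner organization of the paper's induction on the submodules $\mcDy_n$, since verifying that this composite is indeed an algebra structure requires precisely the diagram chase (naturality of $\beta$, the pentagon, and the $\mcDy$-algebra axioms) that constitutes the paper's inductive step.
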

\begin{proof}
Since $\mcDy$ is free as an operad generated by $y$, making $E$ a $\mcDy$-algebra is equivalent to specifying $\gamma_E(y)$.
Observe that the condition that the diagram commutes is trivial when we restrict to $I E \subset \mcDy E$.
On the other hand, one easily checks that restricted to $\kfield y \otimes E $ the diagram expresses exactly that $g=\gamma_E(y)$ is a coderivation: the left hand side composition of the maps in the diagram then equals $\omega g$, while the right hand side equals $(\mcC \circ''(\partial_C;g)) \omega+ d_{\mcC}\omega$. 
To show that this implies the general case we proceed by induction. Suppose that (\ref{CoderRepViaD}) holds restricted to $\mcDy_n$ as well as 
restricted to $\mcDy_m$, where $\mcDy_n$ is the sub-$\kfield$-module of $\mcDy$ spanned by  $\{y^i, i\leq n\}$. We need to show that
$$\xymatrix{
\mcDy_n \mcDy_m (E) \ar[d]_-{\gamma_{\mcDy}E} \ar[r]^-{\gamma_{\mcDy}E } & \mcDy_{n+m} (E) \ar[d]_-{\gamma_E} \\
\mcDy_{n+m}(E) \ar[d]_-{\mcDy \omega} & E \ar[dd]_-{\omega}\\
\mcDy_{n+m}( \mcC(C;E)) = (\mcDy_{n+m} \mcC)(C;E) \ar[d]_-{\beta(C;E)} &\\
(\mcC \mcDy_{n+m})(C;E) \cong \mcC(\mcDy_{n+m}(C); \mcDy_{n+m}(E)) \ar[r]^-{\mcC (\gamma_C; \gamma_E)}  & \mcC(C;E) 
}$$
commutes. 
Keep in mind that $\gamma_E$ defines an algebra structure and note that we have the identities
$$ (\mcDy \omega)(\gamma_{\mcDy} E) = (\gamma_{\mcDy} \mcC(C;E))(\mcDy \mcDy \omega)$$
and
$$\beta (\gamma_{\mcDy} \mcC) = (\mcC \gamma_{\mcDy})( \beta \mcDy)( \mcDy \beta).$$
Then using that (\ref{CoderRepViaD}) holds restricted to $\mcDy_m$ and $\mcDy_n$ we  find that the right and the upper square in the diagram
$$\scalebox{0.90}[0.90]{
\xymatrix{
\mcDy_n \mcDy_m (E) \ar[d]_-{\mcDy \mcDy \omega} \ar[r]^-{\mcDy \gamma_E} & \mcDy_{n}  (E) \ar[dd]_-{\mcDy \omega} \ar[r]^-{\gamma_E} & E \ar[dddd]^-{\omega}\\
\mcDy_{n} \mcDy_{m}\mcC(C;E)  \ar[d]_-{\mcDy\beta(C;E)} &&\\
\mcDy_{n} (\mcC \mcDy_m )(C;E) \cong   \mcDy_n \mcC (\mcDy_m(C); \mcDy_m (E))  \ar[d]_-{\beta \mcDy (C;E)} \ar[r]^-{\mcDy_n\mcC(\gamma_C; \gamma_E)} & \mcDy_n \mcC(C;E) \ar[d]_-{\beta(C;E)} &\\
\mcC \mcDy_{n} \mcDy_m (C;E)\ar[d]_-{\mcC \gamma_{\mcDy}(C;E)} &  \mcC \mcDy_n (C;E)\cong \mcC (\mcDy_n(C); \mcDy_n(E)) \ar[d]_-{\mcC (\gamma_C; \gamma_E)} &\\
\mcC \mcDy_{n+m} (C;E) \cong  \mcC (\mcDy_{n+m} (C); \mcDy_{n+m} (E)) \ar[r]^-{\mcC(\gamma_C; \gamma_E)}  & \mcC(C;E) \ar[r]^{\mcC(C;E)} & \mcC(C;E)
}}$$
commute. Commutativity of the lower left square follows from the fact that $\gamma_C$ and $\gamma_E$ are $\mcDy$-algebra structure maps.
\end{proof}

Let $\Coder(E)$ denote the set of coderivations on the representation $(E,\omega)$. For cofree representations over cofree coalgebras we have the following result.

\begin{proposition} \label{prop:DerHom}
Let $X$ and $Y$ be bigraded modules, and let $\mcC$ be as above. Let $\mcC (X)$ be equipped with a coderivation $\partial_{\mcC(X)}$.
There is a bijection
$$\Coder(\mcC(X;Y)) \cong \Hom(\mcC(X;Y),Y).$$
Explicitly, the bijection is given by composing a coderivation with  $\xymatrix{\mcC(X;Y) \ar[r]^-{\epsilon} &Y}.$ 
To construct a  coderivation $\partial_f$ from a map $f\colon \mcC(X;Y)\ra Y$, set
$$\partial_f = d_{\mcC}(X;Y) +( \mcC \circ_{(1)} (f \vee \epsilon \partial_{\mcC(X)}))(\Delta_{(1)}(X;Y)),$$
where $\circ_{(1)}$ denotes the infinitesimal composite product of morphisms and $\Delta_{(1)}\colon \mcC(X;Y) \ra (\mcC \circ_{(1)} \mcC)(X;Y)$ denotes infinitesimal decomposition, see \cite[6.1.4]{LV12}. The map $f \vee (\epsilon \partial_{\mcC(X)})$ is either $f$ or $\epsilon \partial_{\mcC(X)}$ depending on whether the second copy of $\mcC$ in $(\mcC \circ_{(1)} \mcC)(X;Y)$ is decorated by an element in $Y$ or not.
\end{proposition}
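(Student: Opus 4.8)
The plan is to adapt the classical fact that a coderivation of a cofree conilpotent coalgebra is freely determined by its corestriction to the cogenerators (see \cite[6.3.8]{LV12}) to the present representation setting, where the single distinguished $Y$-slot of $\mcC(X;Y)$ plays the role of the cogenerators. Concretely I would first fix the corestriction map: the counit $\epsilon\colon\mcC(X;Y)\to Y$ projects onto the arity-one summand $\mcC(1)\otimes Y\cong Y$, and sending a coderivation $g$ to $\epsilon g$ gives a well-defined map $\Coder(\mcC(X;Y))\to\Hom(\mcC(X;Y),Y)$. All the real content is then in showing this is a bijection, which I would do by producing the inverse $f\mapsto\partial_f$ explicitly and checking the two composites are identities.

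For surjectivity I would verify that the $\partial_f$ of the statement is genuinely a coderivation and satisfies $\epsilon\partial_f=f$. The key structural observation is that after infinitesimal decomposition $\Delta_{(1)}\colon\mcC(X;Y)\to(\mcC\circ_{(1)}\mcC)(X;Y)$, every summand has exactly one inner $\mcC$-factor, which either carries the distinguished $Y$-slot or carries only $X$-slots; the wedge $f\vee\epsilon\partial_{\mcC(X)}$ applies $f$ in the first case and the corestriction $\epsilon\partial_{\mcC(X)}$ of the fixed base coderivation in the second. I would then push $\partial_f$ through the structure map $\omega=\Delta$ and match the result termwise against the operator $\mcC\circ''(\partial_{\mcC(X)};\partial_f)+d_\mcC(\mcC(X);\mcC(X;Y))$ in the coderivation diagram: the $f$-terms reproduce the $h=\partial_f$ contribution in the single $Y$-slot of $\circ''$, the $\epsilon\partial_{\mcC(X)}$-terms reproduce the $g=\partial_{\mcC(X)}$ contributions in the $X$-slots, and the summand $d_\mcC(X;Y)$ matches the internal differential term. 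The identity $\epsilon\partial_f=f$ falls out because $\epsilon$ annihilates every summand of $\Delta_{(1)}(X;Y)$ except the one with outer factor $\mcC(1)$ and inner factor all of $\mcC(X;Y)$, on which $f\vee\epsilon\partial_{\mcC(X)}$ is forced to be $f$.

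For injectivity I would show that $\epsilon g$ determines $g$, equivalently that $\partial_{\epsilon g}=g$. Both $\Coder$ and the cofree structure respect the weight filtration of $\mcC(X;Y)=\bigoplus_n\mcC(n)\otimes(\cdots)$, so it suffices to prove that a coderivation $g$ with $\epsilon g=0$ vanishes; the coderivation diagram expresses the weight-$n$ part of $\omega g$ through strictly lower-weight data together with the corestriction, so an induction on weight using conilpotency forces $g=0$. Alternatively I could invoke the distributive-law reformulation of Proposition~\ref{prop:CoderRepViaD}, realising a coderivation as the $\mcDy$-algebra value $\gamma_E(y)$ and noting that this too is pinned down by its corestriction.

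The step I expect to be the main obstacle is the sign-and-slot bookkeeping in the surjectivity argument. One must track the single distinguished $Y$-entry against the $X$-entries through the infinitesimal decomposition, account for the Koszul signs in the bigraded vertical-bicomplex setting, and confirm that the case split $f\vee\epsilon\partial_{\mcC(X)}$ reproduces exactly the asymmetric operator $\mcC\circ''(\partial_{\mcC(X)};\partial_f)$, which treats the $Y$-entry and the $X$-entries differently. This combinatorial matching, rather than any conceptual subtlety, is where the actual work lies.
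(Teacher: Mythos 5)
Your proposal is correct in substance, but it diverges from the paper's proof at both points where the real work happens, and each of your routes needs a small repair. For the construction $f\mapsto\partial_f$, the paper does not verify the coderivation diagram termwise. Instead it defines a $\mcDy$-algebra structure map $\gamma_f$ on $\kfield y\otimes\mcC(X;Y)$ as the composite of $\mcDy\Delta(X;Y)$, the distributive law $\beta\mcC(X;Y)$ and $\mcC(\epsilon\gamma_{\mcC(X)};\bar f)$, and then deduces the coderivation property from Proposition~\ref{prop:CoderRepViaD}, coassociativity of $\Delta$, and the fact that $\beta$ is a distributive law; the slot-and-sign bookkeeping you correctly identify as the main obstacle is thereby concentrated once and for all in $\beta$. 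If you carry out the termwise matching by hand instead, you will need \cite[6.3.8]{LV12} in the form $\partial_{\mcC(X)}=d_\mcC(X)+(\mcC\circ_{(1)}\epsilon\partial_{\mcC(X)})\Delta_{(1)}(X)$ in order to recognise the $\epsilon\partial_{\mcC(X)}$-terms of your expansion as the $\partial_{\mcC(X)}$-contributions appearing in $\mcC\circ''$; the paper invokes exactly this identity, and without it the matching cannot be closed. You gesture at this step but do not name the ingredient.

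For injectivity the paper is shorter than your plan: given a coderivation $v$, it computes $\partial_{\epsilon v}$ directly, uses the coderivation identity satisfied by $v$ to rewrite $(\mcC\circ_{(1)}((\epsilon\partial_{\mcC(X)})\vee(\epsilon v)))\Delta_{(1)}(X;Y)+d_\mcC(X;Y)$ as $(\mcC\epsilon)(X;Y)\Delta(X;Y)v$, and concludes $\partial_{\epsilon v}=v$ from the cooperad counit axiom --- no conilpotency or weight induction is needed. Your induction would also work, but your reduction has a gap as stated: coderivations form an affine, not a linear, subspace, since the $\partial_{\mcC(X)}$-terms and the $d_\mcC$-term in the defining diagram do not depend on $g$. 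So it does not suffice to show that \emph{a coderivation} with vanishing corestriction is zero; you must first observe that the difference of two coderivations with equal corestrictions satisfies the homogeneous condition (it is a coderivation with respect to the zero coderivation on $\mcC(X)$ and zero cooperad differential) and then run your argument on that difference. With these two repairs your proof goes through; what it buys is self-containedness, while the paper's route buys brevity and sign-safety by reusing the distributive-law machinery it has already established.
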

\begin{proof}
Let $f\colon \mcC(X;Y) \ra Y$ be given and let $\gamma_{\mcC(X)} \colon \mcDy \mcC(X) \ra \mcC(X)$ correspond to $\partial_{\mcC(X)}$. 
Since $\mcDy$ is freely generated by $y$ we can define $\gamma_f \colon \mcDy   \mcC(X;Y)  \ra  \mcC(X;Y) $  by requiring that restricted to $\kfield y \otimes  \mcC(X;Y) \subset \mcDy  \mcC(X;Y)  $ it is given by
$$\xymatrix{
 \kfield y \otimes  \mcC(X;Y)  \ar[r]^-{\mcDy \Delta (X;Y)} & (\mcDy \mcC \mcC)(X;Y) \ar[r]^-{\beta \mcC(X;Y)}  &  \mcC(\mcDy \mcC(X); \mcDy  \mcC(X;Y) ) \ar[r]^-{\mcC(\epsilon\gamma_{\mcC(X)}; \bar{f})} &  \mcC(X;Y), 
}$$
where $\bar{f}\colon \mcDy   \mcC(X;Y)  \ra  Y$ resembles the sum of $f$ and the counit: It is defined by 
$$\bar{f}((y^j;c)(x_1,...,z,...,x_n))) = \begin{cases}
\epsilon(c)(x_1,...,z,...,x_n), & j=0,\\
f(c(x_1,...,z,...,x_n)),  & j=1,\\
0, & j>1.
\end{cases}
$$
We need to show that $\gamma_f$ corresponds to a coderivation.
We saw in the proof of Proposition~\ref{prop:CoderRepViaD} that (\ref{CoderRepViaD}) holds if it holds restricted to $\kfield y \otimes  \mcC(X;Y)$, and hence we only consider that case. First observe that 
$$\gamma_{\mcC(X)}(y) = d_{\mcC} + (\mcC \circ_{(1)} \epsilon\gamma_{\mcC(X)}(y)) \Delta_{(1)}$$
according to~\cite[6.3.8]{LV12}. Hence restricted to $\kfield y \otimes \mcC(X)$ the map $\gamma_{\mcC(X)}$ equals
$$\xymatrix{
\kfield y \otimes \mcC(X) \ar[r]^-{\mcDy \Delta (X)} & \mcDy \mcC \mcC(X) \ar[r]^-{\beta \mcC (X)} & \mcC\mcDy\mcC (X) \ar[r]^-{\mcC(\epsilon \gamma_{\mcC(X)})} & \mcC(X). 
}$$
We conclude that
$$ \mcC(\gamma_{\mcC(X)};\gamma_f) \colon   \mcC(\mcDy  \mcC(X); \mcDy  \mcC(X;Y)) \ra \mcC(\mcC(X); \mcC(X;Y))$$
can be written as
$$\xymatrix{
(\mcC \mcDy\mcC)(X;Y) \ar[r]^-{ \mcC \mcDy \Delta(X;Y)} &  (\mcC \mcDy  \mcC \mcC)(X;Y) \ar[r]^-{\mcC \beta \mcC(X;Y)} & ( \mcC \mcC \mcDy  \mcC )(X;Y) \ar[r]^-{ \mcC \mcC ( \epsilon \gamma_{\mcC(X)};  \bar{f})} &
 \mcC \mcC(X;Y) .
}$$
Hence we need to examine the diagram
$$\xymatrix{
y \mcC(X;Y) \subset \mcDy  \mcC(X;Y)  \ar[r]^-{\mcDy \Delta(X;Y)}\ar[dd]_-{\mcDy \Delta(X;Y)} & (\mcDy  \mcC\mcC)(X;Y) \ar[d]_-{\beta  \mcC(X;Y)} \\ 
 & (\mcC\mcDy \mcC)(X;Y) \ar[d]_-{\mcC \mcDy \Delta (X;Y)} \\
(\mcDy  \mcC \mcC)(X;Y) \ar[d]_-{\beta  \mcC(X;Y)}  & (\mcC \mcDy\mcC \mcC)(X;Y) \ar[d]_-{\mcC \beta \mcC(X;Y)}\\
(\mcC\mcDy \mcC)(X;Y)\ar[d]_-{\cong}  &   \mcC \mcC \mcDy\mcC(X;Y) \ar[d]_-{\cong} \\
 \mcC(\mcDy\mcC(X); \mcDy \mcC(X;Y))  \ar[d]_-{\mcC (\epsilon \gamma_{\mcC(X)}; \bar{f})} & \mcC \mcC(\mcDy \mcC(X); \mcDy \mcC(X;Y)) \ar[d]_-{\mcC \mcC(\epsilon \gamma_{\mcC(X)};\bar{f})}\\
\mcC(X;Y)\ar[r]^-{\Delta(X;Y)}& (\mcC \mcC)(X;Y)\\
}$$
That $\Delta$ commutes with the two lower vertical maps is clear. Using that $\beta$ is a distributive law and the coassociativity of $\Delta$ yields that 
$\gamma_f$ indeed defines a coderivation. One easily checks that $\gamma_f(y)$ coincides with $d_{\mcC} + (\mcC \circ_{(1)} ((\epsilon \partial_{\mcC(X)} ) \vee f))\Delta_{(1)}$ as claimed.
 
It remains to show that $\Hom(\mcC(X;Y) ,Y)$ and $\Coder(\mcC(X;Y))$ are in bijection.
It is clear that $\epsilon \partial_f=f$. Given a coderivation $v$, to see that $\partial_{\epsilon v}=v$, calculate that

\begin{align*}
(\mcC \circ_{(1)}( (\epsilon \partial_{\mcC(X)}) \vee(\epsilon v))) &\Delta_{(1)}(X;Y)+d_\mcC(X;Y) \\
 &= 
 \mcC(\epsilon(X);\epsilon(X;Y))(\mcC\circ^{''}(\partial_{\mcC(X)};v))\Delta(X;Y)\\
&\quad+\mcC(\epsilon(X);\epsilon(X;Y))d_\mcC(\mcC(X);\mcC(X;Y))\Delta(X;Y)\\
 &= \mcC(\epsilon(X);\epsilon(X;Y))\Delta(X;Y) v\\
&=
(\mcC\epsilon)(X;Y)\Delta(X;Y) v  =  v,
\end{align*}
where the expression
$\mcC(\epsilon(X);\epsilon(X;Y))\Delta(X;Y) v$
is interpreted as the composite
$$\xymatrix{\mcC(X;Y)\ar[r]^v& \mcC(X;Y)\ar[r]^<<<<<{\Delta(X;Y)}& 
(\mcC \mcC)(X;Y)\cong \mcC(\mcC(X);\mcC(X;Y))
\ar[rr]^-{\mcC(\epsilon(X);\epsilon(X;Y))}&& \mcC(X;Y)
}$$
and the expression
$(\mcC\epsilon)(X;Y)\Delta(X;Y) v$ 
is interpreted as the composite
$$\xymatrix{\mcC(X;Y)\ar[r]^v& \mcC(X;Y)\ar[r]^<<<<<{\Delta(X;Y)}& 
(\mcC \mcC)(X;Y)
\ar[rr]^-{(\mcC\epsilon)(X;Y)}&& \mcC(X;Y).
}$$
\end{proof}

Since we are interested in codifferentials we need to examine squares of coderivations. Recall that in the coalgebra case it is well known that the square of a coderivation of odd vertical degree is again a coderivation.

\begin{lemma}
Let $g \colon E \ra E$ and $\partial_C$ be coderivations of odd vertical degree. Then $g^2$ is a coderivation for $d_{\mcC}=0$ with respect to the coderivation $\partial_C^2$ on $C$, i.e.\  the following diagram commutes.
$$\xymatrix{
E\ar[d]_{\omega} \ar[r]^-{g^2} &E \ar[d]_{\omega}\\
\mcC (C;E) \ar[r]^*!/^2.5pt/{\labelstyle \mcC \circ'' (\partial_C^2;g^2)} &  \mcC(C;E)
}$$
\end{lemma}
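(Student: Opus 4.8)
The plan is to reduce the statement to a purely formal identity between endomorphisms of $\mcC(C;E)$, exactly as in the classical computation that the square of an odd coderivation on a coalgebra is again a coderivation. Write $\Phi := \mcC \circ''(\partial_C; g)$ for the endomorphism of $\mcC(C;E)$ appearing in the coderivation condition. Since we are assuming $d_{\mcC}=0$, the defining diagram for $g$ collapses to the single relation $\omega g = \Phi\,\omega$. Applying this twice gives
$$\omega g^2 = \Phi\,\omega g = \Phi^2\,\omega,$$
so the lemma is equivalent to the operator identity
$$\Phi^2 = \mcC \circ''(\partial_C^2; g^2)$$
on $\mcC(C;E)$, which no longer involves $\omega$ at all.

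To prove this identity I would work on a fixed summand $\mcC(n)\otimes C^{\otimes a}\otimes E\otimes C^{\otimes b}$ with $n=a+b+1$, where by the definition of $\mcC\circ''$ we have $\Phi = \sum_{i} D_i + G$; here $D_i$ denotes ``apply $\partial_C$ to the $i$-th $C$-slot'' (for $i\neq a+1$) and $G$ denotes ``apply $g$ to the single $E$-slot'', each carrying the Koszul sign implicit in the tensor notation. Squaring gives
$$\Phi^2 = \sum_i D_i^2 + \sum_{i\neq j} D_i D_j + \sum_i (D_i G + G D_i) + G^2.$$
The diagonal terms are the content of the claim: $D_i^2$ applies $\partial_C^2$ to the $i$-th $C$-slot (the preceding-slot signs occur twice and cancel), so $\sum_i D_i^2$ is the $\partial_C^2$-part of $\mcC\circ''(\partial_C^2;g^2)$, while $G^2$ applies $g^2$ to the $E$-slot and is its $g^2$-part. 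Thus the diagonal terms already assemble to $\mcC\circ''(\partial_C^2;g^2)$, and it remains to kill the two families of cross terms.

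The heart of the argument, and the only place the oddness hypothesis is used, is the vanishing of $\sum_{i\neq j} D_i D_j$ and $\sum_i (D_i G + G D_i)$. By the Koszul sign rule, operators applied to distinct tensor factors commute up to the sign $(-1)^{|\cdot||\cdot|}$, so $D_i D_j = (-1)^{|\partial_C||\partial_C|} D_j D_i$ for $i\neq j$ and $D_i G = (-1)^{|\partial_C||g|} G D_i$ for every $i$. Since $\partial_C$ and $g$ have odd (vertical) degree — in the case of interest both have bidegree $(0,1)$, where the bigraded pairing $|(0,1)||(0,1)| = 1$ is odd — each of these signs is $-1$, so the cross terms cancel in pairs and the identity follows.

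I expect the main obstacle to be the precise sign bookkeeping rather than any structural difficulty: one must verify that interchanging $\partial_C$ past $\partial_C$ (resp.\ past $g$) on different slots really produces the factor $-1$ under the convention $|(r,s)||(r',s')| = rr'+ss'$, and that a repeated application to a single slot yields $\partial_C^2$ with no residual sign. Once that sign is confirmed, the computation is the representation-level analogue of the classical identity $(g\otimes 1 + 1\otimes g)^2 = g^2\otimes 1 + 1\otimes g^2$; the same sign cancellation also shows $\partial_C^2$ is itself a coderivation on $C$, which is what makes the phrase ``with respect to the coderivation $\partial_C^2$'' meaningful in the statement.
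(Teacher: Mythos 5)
Your proposal is correct and takes essentially the same approach as the paper: reduce to the operator identity $(\mcC \circ''(\partial_C;g))^2 = \mcC \circ''(\partial_C^2;g^2)$ and verify it slotwise, with the odd-degree Koszul signs cancelling the cross terms so that only the diagonal terms survive. The paper's own proof is simply a terser version of this, asserting directly that $(\mcC \circ''(\partial_C;g))^2$ acts by $(\partial_C \vee g)^2 = \partial_C^2 \vee g^2$ on each slot; your explicit expansion $\Phi^2 = \sum_i D_i^2 + \sum_{i\neq j} D_iD_j + \sum_i(D_iG + GD_i) + G^2$ and the pairwise cancellation is exactly the ``closer look at the definitions'' the paper leaves to the reader.
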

\begin{proof}
One calculates that due to our assumptions on the degrees of the maps involved
$$ \omega g^2 = (\mcC \circ'' (\partial_C;g))^2 \omega.$$
A closer look at the definitions together with the degree hypothesis shows that 
$(\mcC \circ'' (\partial_C;g))^2$ maps an element  $z \in \mcC(n) \otimes \bigoplus_{i=1}^{n}(C^{\otimes i-1} \otimes E \otimes C^{\otimes n-i})$ to
$$
\sum_{j=1}^n  (\mcC(n) \otimes 1^{\otimes j-1} \otimes (\partial_C \vee g)^2 \otimes 1^{\otimes k-j}) (z),
$$
with $1$ denoting the identity on $C$ or $E$.
Since $(\partial_C \vee g)^2 = \partial_C^2 \vee g^2$ we find that
$$(\mcC \circ'' (\partial_C ; g))^2 = \mcC \circ'' (\partial_C^2 ; g^2).
$$
\end{proof}

\subsection{Representations via coderivations}
\label{subsec:modulestructures}

Let $\mcP$ be a Koszul operad. We already saw that $\mcP_{\infty}$-algebra structures on a vertical bicomplex $A$ with vertical differential $d_A$ are in bijection with the class of square-zero coderivations $\partial_{h+d_A \epsilon}$ induced by $h \colon \overline{\mcP^{\antishriek}}(A) \ra A$ and the internal differential $d_A$ on $A$. We will now prove a similar result for representations. For background on Koszul duality and the cobar construction we refer the reader to \cite{GK94} and \cite{LV12}.

For $M\in \BCv$ to be a representation of $A$ means that there is a morphism 
$$f_{\infty}\colon \mcP_{\infty}(A;M)  \ra M$$
of vertical bicomplexes satisfying certain properties. Since $\mcP_{\infty}= \Omega(\mcP^{\antishriek})$ is free this is equivalent to giving a map 
$$f \colon \overline{\mcP^{\antishriek}}(A;M) \ra M$$ 
of bidegree $(0,1)$ on the augmentation ideal of $\mcP^{\antishriek}(A;M)$ such that
$$d_M f + f  d_{\overline{\mcP}^{\antishriek}(A;M)} + f_{\infty} d_2s^{-1} =0, $$
with $d_{\overline{\mcP}^{\antishriek}(A;M)}$ the differential on $\overline{\mcP}^{\antishriek}(A;M)$ induced by $d_{\mcP^{\antishriek}}$, $d_A$ and $d_M$. Here $d_2$ denotes the twisting differential of the cobar construction and $s^{-1}\colon  \overline{\mcP}^{\antishriek}(A;M) \ra s^{-1}\overline{\mcP}^{\antishriek}(A;M)$ the desuspension map.\\

By Proposition~\ref{prop:DerHom} the map $ d_M \epsilon + f \colon \mcP^{\antishriek}(A;M) \ra M$ gives rise to a coderivation $\partial_{d_M\epsilon+f}$ on $\mcP^{\antishriek}(A;M)$.

\begin{proposition}\label{P:rep_coder} Given an arbitrary map $f \colon \overline{\mcP^{\antishriek}}(A;M) \ra M$ the coderivation $\partial_{d_M \epsilon+f}$ squares to zero if and only if $f$ is constructed from a $\mcP_{\infty}$-representation as above.
\end{proposition}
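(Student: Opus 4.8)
The plan is to prove both implications at once by reducing the condition $\partial_{d_M\epsilon+f}^2=0$ to a single equation on cogenerators, exactly as in the $\mcP_\infty$-algebra case recalled before Proposition~\ref{prop:dAInfAlgAsTw}. Throughout, $A$ is a fixed $\mcP_\infty$-algebra, so its structure is a square-zero coderivation $\partial_{h+d_A\epsilon}$ on the cofree coalgebra $\mcP^{\antishriek}(A)$, and it is this identity $\partial_{h+d_A\epsilon}^2=0$ that lets me control the base of the representation $\mcP^{\antishriek}(A;M)$.

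First I would show that $\partial_{d_M\epsilon+f}^2$ is again a coderivation, but now with respect to the \emph{zero} coderivation on $\mcP^{\antishriek}(A)$. Both $\partial_{d_M\epsilon+f}$ (on $E=\mcP^{\antishriek}(A;M)$) and $\partial_{h+d_A\epsilon}$ (on the base) have vertical degree one, and since $\mcP$ is Koszul the cooperad $\mcP^{\antishriek}$ carries no internal differential, so $d_{\mcP^{\antishriek}}=0$. The preceding Lemma on squares of coderivations therefore applies and identifies $\partial_{d_M\epsilon+f}^2$ as a coderivation of the representation with respect to $\partial_{h+d_A\epsilon}^2$; as this base coderivation vanishes, $\partial_{d_M\epsilon+f}^2$ is a coderivation with respect to the zero coderivation. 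I would then invoke Proposition~\ref{prop:DerHom} with zero base coderivation: it yields a bijection $\Coder(\mcP^{\antishriek}(A;M))\cong\Hom(\mcP^{\antishriek}(A;M),M)$ given by $v\mapsto\epsilon v$. This map is injective, so a coderivation of the representation vanishes precisely when its composite with $\epsilon$ does; applying this to $\partial_{d_M\epsilon+f}^2$ reduces the square-zero condition to the single identity $\epsilon\,\partial_{d_M\epsilon+f}^2=0$, in direct analogy with \cite[6.3.8]{LV12}.

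It then remains to compute $\epsilon\,\partial_{d_M\epsilon+f}^2$ and recognise it as the defining equation of a representation. Since $\epsilon\,\partial_{d_M\epsilon+f}=d_M\epsilon+f$ by Proposition~\ref{prop:DerHom}, I would write $\epsilon\,\partial_{d_M\epsilon+f}^2=(d_M\epsilon+f)\,\partial_{d_M\epsilon+f}$ and substitute the explicit formula
$$\partial_{d_M\epsilon+f}=\bigl(\mcP^{\antishriek}\circ_{(1)}((\epsilon\partial_{h+d_A\epsilon})\vee(d_M\epsilon+f))\bigr)\Delta_{(1)}(A;M),$$
in which the $d_{\mcP^{\antishriek}}$-term is absent. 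Expanding via the infinitesimal decomposition $\Delta_{(1)}$ and the $\vee$-construction, the composites split according to whether each application lands in the internal-differential slots ($d_A\epsilon$, $d_M\epsilon$) or in the genuine operation slot. The main obstacle is precisely this bookkeeping: one has to check that the linear contributions assemble into $d_M f + f\,d_{\overline{\mcP}^{\antishriek}(A;M)}$ while the quadratic contribution, controlled by the cooperadic decomposition, assembles into $f_{\infty}d_2s^{-1}$, all with the correct Koszul signs coming from the bidegrees and from the position of the $M$-slot among the $A$-slots. Once this identification is carried out, $\epsilon\,\partial_{d_M\epsilon+f}^2=0$ is literally the equation $d_M f + f\,d_{\overline{\mcP}^{\antishriek}(A;M)} + f_{\infty}d_2s^{-1}=0$, which is exactly the condition that $f$ arise from a $\mcP_\infty$-representation, establishing the desired equivalence.
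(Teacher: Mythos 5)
Your proposal is correct and takes essentially the same route as the paper: it reduces $\partial_{d_M\epsilon+f}^2=0$ to the vanishing of $\epsilon\,\partial_{d_M\epsilon+f}^2$ via the lemma on squares of odd coderivations (with square-zero base coderivation $\partial_{h+d_A\epsilon}$) and the bijection $\Coder(\mcP^{\antishriek}(A;M))\cong\Hom(\mcP^{\antishriek}(A;M),M)$ of Proposition~\ref{prop:DerHom}, and then expands $(d_M\epsilon+f)\,\partial_{d_M\epsilon+f}$ into the linear terms $d_Mf+f\,d_{\overline{\mcP^{\antishriek}}(A;M)}$ plus the quadratic term $f_{\infty}d_2s^{-1}$, exactly as in the paper's computation. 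The only differences are cosmetic: you spell out the reduction step that the paper compresses into ``the results above yield,'' and you write the $\vee$-arguments in the opposite order, a convention the paper itself uses inconsistently.
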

\begin{proof}
The results above yield that we only need to check under which conditions $\epsilon \partial_{d_M\epsilon+f}^2$ vanishes. We have
\begin{eqnarray*}
\epsilon \partial_{d_M\epsilon+f}^2 
& = & d_M\epsilon \partial_f + f (d_{ \mcP^{\antishriek}} + (\mcP^{\antishriek} \circ_{(1)} ((d_M\epsilon+f)\vee (d_A\epsilon + h))) (\Delta_{(1)}(A;M)))\\
& = & d_M f + f d_{ \mcP^{\antishriek}}  + f(\mcP^{\antishriek} \circ_{(1)} ((d_M\epsilon)\vee (d_A\epsilon))) 
(\Delta_{(1)}(A;M)) \\
&&+ f(\mcP^{\antishriek} \circ_{(1)} (f\vee  h)) (\Delta_{(1)}(A;M)).
\end{eqnarray*}
Note that 
$$
f (\mcP^{\antishriek} \circ_{(1)} (d_M\epsilon \vee d_A\epsilon))(\Delta_{(1)}(A;M))$$
equals the differential induced on $\overline{\mcP^{\antishriek}}(A;M)$ by $d_A$ and $d_M$. Since $f$ is only non-zero on the augmentation ideal we hence find that
$$\epsilon \partial_{d_M+f}^2= f d_{\overline{\mcP^{\antishriek}}(A;M)}  + d_Mf+f(\mcP^{\antishriek} \circ_{(1)} (f\vee h)) (\Delta_{(1)}(A;M)). $$
But
$$f(\mcP^{\antishriek}\circ_{(1)}(f\vee h)) (\Delta_{(1)}(A;M)) = f_{\infty}d_2 s^{-1}$$
and the result follows.
\end{proof}

\begin{remark}\rm
One could also state the result by saying that for a bigraded module $M$ a map $g\colon \mcP^{\antishriek}(A;M) \ra M$ of bidegree $(0,1)$ induces a square-zero coderivation on $\mcP^{\antishriek}(A;M) $  if and only if $(M, g\vert_M)$ viewed as a vertical bicomplex with differential $g\vert_M$ is a $\mcP_{\infty}$-representation of $A$ with structure map induced by $g\vert_{\overline{\mcP^{\antishriek}}(A;M)}$. The formulation above is purely a choice of making the role of the vertical differential on 
$M$ explicit to emphasize the category we work in rather than keeping it implicit.
\end{remark}

\subsection{Coderivations of $(\dAs)^{\antishriek}$-representations and representations of derived $A_{\infty}$-algebras}

In section \ref{sec:coalgebras} we saw how to 
describe $\As^{\antishriek}$-coalgebras and $(\dAs)^{\antishriek}$-coalgebras in terms of traditional conilpotent coalgebras. We will now determine what a $(\dAs)^{\antishriek}$-representation of a $(\dAs)^{\antishriek}$-coalgebra $C$ looks like. The results in this section as well as their proofs are analogous to the results for $(\dAs)^{\antishriek}$-coalgebras in \ref{subsec:dAsCoalgebras}. In particular it yields more insights to describe the structure on the suspension of a representation rather than the representation itself. 

\begin{remark}\rm
Before we concentrate on the derived case, let us consider representations of ordinary $A_{\infty}$-algebras. We know that conilpotent $\As^{\antishriek}$-coalgebras and  conilpotent coassociative coalgebras correspond to each other, and so do the notions of $\As^{\antishriek}$-coderivation and traditional coderivation. Recall that under this correspondence an $\As^{\antishriek}$-coalgebra $C$ corresponds to the traditional coalgebra $sC$.\\
For representations the same reasoning shows that $(E, \omega)$ is an $\As^{\antishriek}$-representation of $C$ if and only if $sE$ is a coassociative $sC$-bicomodule. One easily checks that $\As^{\antishriek}$-coderivations on $E$ coincide with coderivations of $sE$ as a bicomodule. 

In particular, for $sC=\overline{T}^c(sA)\cong s\As^{\antishriek}(A)$ equipped with a square-zero coderivation making $A$ an $A_{\infty}$-algebra we find that representations of $A$ correspond to codifferentials on the $\overline{T}^c(sA)$-bicomodule $T^c(sA) \otimes sM \otimes T^c(sA) \cong s\As^{\antishriek}(A;M)$.  
Hence we retrieve the notion  of $\emph{two-sided module over an $A_{\infty}$-algebra}$ given by Getzler and Jones \cite{GJ90}.
\end{remark}

\begin{proposition}
There is an equivalence between the category of $(\dAs)^{\antishriek}$-representations $E$ of $C$ and the category whose objects are $sC$-bicomodules 
$$(sE, \Delta^L\colon sE \ra sC \otimes sE, \Delta^R\colon sE \ra sE \otimes sC),$$ 
together with a map $f_{sE} \colon sE\ra sE$ of bidegree $(1,1)$ such that
$$ (f_{sC} \otimes sE)\Delta^L = \Delta^L f_{sE} =  (sC \otimes f_{sE}) \Delta^L $$
and
$$ (sE \otimes f_{sC})\Delta^R = \Delta^R f_{sE} = (f_{sE} \otimes sC) \Delta^R$$
and whose morphisms are bicomodule morphisms commuting with $f_{sE}$.
Under this equivalence a $(\dAs)^{\antishriek}$-coderivation of $E$ of bidegree $(0,1)$ corresponds to a 
coderivation of $sE$ as an $sC$-bicomodule of the same bidegree anti-commuting with $f_{sE}$.
\end{proposition}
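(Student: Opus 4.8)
The plan is to mirror the proof of Proposition~\ref{prop:dAscoalg}, transporting the entire argument from coalgebras to representations via cooperadic suspension and the decomposition $\Lambda(\dAs)^{\antishriek}\cong\mcDx^{\antishriek}\circ\Lambda(\As)^{\antishriek}$ recorded in Section~\ref{subsec:dAsCoalgebras}. First I would pass to the suspended setting. Just as $(E,\omega)$ is an $\As^{\antishriek}$-representation of $C$ precisely when $sE$ is a bicomodule over the traditional coalgebra $sC$, the shift relation $\Lambda(\dAs)^{\antishriek}(sC;sE)\cong s\,(\dAs)^{\antishriek}(C;E)$ (which follows by counting suspensions, since $a+b+1=n$ gives $s^{1-n}\otimes s^{a+b+1}=s$) shows that $E$ is a $(\dAs)^{\antishriek}$-representation of $C$ if and only if $sE$ is a $\Lambda(\dAs)^{\antishriek}$-representation of $sC$. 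This correspondence is functorial in both variables, so it suffices to identify $\Lambda(\dAs)^{\antishriek}$-representations of $sC$ with the bicomodule-plus-$f$ data in the statement.

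Next I would make the target of the suspended structure map explicit. Using the generators $\alpha_{un}$ of $\Lambda(\dAs)^{\antishriek}$ of bidegree $(-u,-u)$ from Corollary~\ref{cor:lambda}, exactly as in the proof of Proposition~\ref{prop:dAscoalg} one obtains an identification
\[
\Lambda(\dAs)^{\antishriek}(sC;sE)=\bigoplus_{n}\Lambda(\dAs)^{\antishriek}(n)\otimes\!\!\bigoplus_{a+b+1=n}\!\!(sC)^{\otimes a}\otimes sE\otimes(sC)^{\otimes b}\;\cong\;\bigoplus_{a,b}\kfield[x]\otimes(sC)^{\otimes a}\otimes sE\otimes(sC)^{\otimes b}.
\]
I would then decompose the coaction $\omega\colon sE\to\Lambda(\dAs)^{\antishriek}(sC;sE)$ into components $\omega_{i;a,b}\colon sE\to(sC)^{\otimes a}\otimes sE\otimes(sC)^{\otimes b}$ landing in the summand indexed by $\kfield x^i$, and read off the structure from the lowest components: set $\Delta^L=\omega_{0;1,0}$, $\Delta^R=\omega_{0;0,1}$ and $f_{sE}=\omega_{1;0,0}$.

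The heart of the proof is to show that the two commuting diagrams defining a representation are equivalent to the stated axioms. The counit diagram forces $\omega_{0;0,0}=\mathrm{id}$. Projecting the coassociativity diagram onto its $x^0$-part (the pure $\Lambda(\As)^{\antishriek}$-part) yields that $\Delta^L,\Delta^R$ make $sE$ a bicomodule over the coalgebra $sC$ of Proposition~\ref{prop:dAscoalg}, exactly as coassociativity of $\rho$ there produced a coassociative comultiplication. Projecting onto the $x^1$-components, and using that the dual-numbers factor sits in arity one, produces the two compatibility relations
\[
(f_{sC}\otimes sE)\Delta^L=\Delta^L f_{sE}=(sC\otimes f_{sE})\Delta^L,\qquad (sE\otimes f_{sC})\Delta^R=\Delta^R f_{sE}=(f_{sE}\otimes sC)\Delta^R,
\]
and more generally forces every $\omega_{i;a,b}$ to be a signed iterate of $\Delta^L,\Delta^R$ and powers of $f$, so that the representation is completely determined by $\Delta^L,\Delta^R,f_{sE}$. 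Conversely, given such data one defines the $\omega_{i;a,b}$ by these formulae and checks, using the compatibility relations to move the $f$'s freely past tensor factors, that the representation axioms hold; functoriality then upgrades this objectwise correspondence to the claimed equivalence of categories.

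Finally, for the coderivation statement I would run the same component analysis on a map $g\colon E\to E$ using the coderivation diagram (equivalently via Proposition~\ref{prop:DerHom}): the $x^0$-part recovers the condition that $sg$ is a coderivation of $sE$ as an $sC$-bicomodule, while compatibility with the dual-numbers factor forces the anti-commutation $g f_{sE}=(-1)^{|(0,1)||(1,1)|}f_{sE}g=-f_{sE}g$, precisely as the sign appeared in the dual-numbers discussion of Section~\ref{subsec:dAsCoalgebras}. The main obstacle throughout is the sign and placement bookkeeping: because $E$ now sits in the middle of a tensor word $(sC)^{\otimes a}\otimes sE\otimes(sC)^{\otimes b}$, one must verify that the single arity-one operation $x$ can be transported consistently onto the left $C$-factors, onto $E$, and onto the right $C$-factors. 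This is exactly what produces the two-sided (four-fold) compatibility above, and is the one place where genuinely new checking is required beyond Proposition~\ref{prop:dAscoalg}.
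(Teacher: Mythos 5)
Your proposal is correct and follows essentially the same route as the paper's proof: pass to the suspended setting, identify $\Lambda(\dAs)^{\antishriek}(sC;sE)$ with a $\kfield[x]$-indexed sum of tensor words, extract $\Delta^L$, $\Delta^R$, $f_{sE}$ from the lowest components of the coaction, and show via the $x^0$- and $x^1$-projections of coassociativity (and the formula expressing every higher component as a signed iterate of these) that this data determines and is determined by the representation, with the coderivation statement following from the bidegrees of $\alpha_{02}$ and $\alpha_{11}$. The only substantive difference is bookkeeping—the paper packages the left and right coproducts into a single projection $\omega^{0,2}$ and records the explicit signs (e.g.\ all three compatibility composites equal $-\omega^{1,2}$), which you defer but correctly flag as the remaining verification.
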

\begin{proof}
We recalled that $E$ is a  $(\dAs)^{\antishriek}$-representation of $C$ if and only if $sE$ is a 
$\Lambda  (\dAs)^{\antishriek}$-representation of $sC$, hence we might as well determine what 
$\Lambda  (\dAs)^{\antishriek}$-representations are. Similar considerations hold for coderivations on these structures. So suppose 
$E'$ is a $\Lambda (\dAs)^{\antishriek}$-representations of $C'$. Let 
$$\rho \colon C' \ra \Lambda (\dAs)^{\antishriek}(C') \quad \text{and} \quad \omega \colon E' \ra \Lambda 
(\dAs)^{\antishriek}(C';E')$$
denote the structure maps and let
$$\rho^{i,n} \colon \xymatrix{ C' \ar[r]^-{\rho} & \Lambda (\dAs)^{\antishriek}(C') \ar@{>>}[r] & \kfield 
\alpha_{in}\otimes C'^{\otimes n} \ar[r]^-{\cong} & C'^{\otimes n}}$$
and
$$\omega^{i,n} \colon \xymatrix{ E' \ar[r]^-{\omega} & (\dAs)^{\antishriek}(C';E') \ar@{>>}[r] & \kfield 
\alpha_{in}\otimes (\bigoplus_{a+b+1=n}C'^{\otimes a} \otimes E' \otimes C'^{\otimes b})  \\
&\ar[r]^-{\cong} & \bigoplus_{a+b+1=n}C'^{\otimes a} \otimes E' \otimes C'^{\otimes b}}$$
be the projections of the structure maps to the indicated components. Here $i \geq 0$ and $n \geq 1$ with $\rho^{0,1}$ and $\omega^{0,1}$ equal to the identity. 

Spelling out the coassociativity condition for $\omega$ in terms of these projections yields the condition that
\begin{equation} \label{eq:expdAscoass}
((\rho/\omega)^{i_1, k_1} \otimes \cdots\otimes (\rho/\omega)^{i_n,k_n})\omega^{i,n} 
= (-1)^{ \sigma}\omega^{i_1+\cdots+i_n+i, k_1+\cdots+k_n}
\end{equation}
where $\sigma= i(k_1+\cdots+k_n+n) + \sum_{1 \leq x<y \leq n}(i_x k_y +i_y k_x)$,
for all $i,i_1,...,i_n \geq 0$ and $n,k_1,...,k_n \geq 1$, with $(\rho/\omega)^{r,s}$ denoting $\rho^{r,s}$ or $\omega^{r,s}$ depending on the input; see  Corollary~\ref{cor:lambda}.
In particular
\begin{equation}\label{eq:expdAscoder1}
((\rho/\omega)^{0,2} \otimes  1) \omega^{0,2} = (1 \otimes (\rho/\omega)^{0,2}) \omega^{0,2}
\end{equation}
with $1$ denoting either the identity on $C'$ or $E'$, because both terms coincide with $\omega^{0,3}$ and
\begin{equation}\label{eq:expdAscoder2}
((\rho/\omega)^{1,1} \otimes 1) \omega^{0,2} =  \omega^{0,2} \omega^{1,1} = (1 \otimes (\rho/\omega)^{1,1}) \omega^{0,2},
\end{equation}
because all of these compositions are equal to $-\omega^{1,2}$.
Hence $sE$ is an $sC$-bicomodule with a map $f_{sE}= s\omega^{1,1}$ having the properties claimed above. One also sees that 
\begin{equation}\label{eq:omegars}
\omega^{r,s} =  (-1)^{r(s+1)}\omega^{0,s} (\omega^{1,1})^r
\end{equation}
with 
$$\omega^{0,s} = ((\rho/\omega)^{0,2} \otimes 1^{\otimes s-2})((\rho/\omega)^{0,2} \otimes 1^{\otimes s-3}) ...((\rho/\omega)^{0,2} \otimes 1) \omega^{0,2}$$ denoting iterated applications of $(\rho/\omega)^{(0,2)}$. Hence $\omega^{0,2}$ and $\omega^{1,1}$ determine $\omega$ and one can calculate that (\ref{eq:omegars}) together with (\ref{eq:expdAscoder1}) and (\ref{eq:expdAscoder2}) yields (\ref{eq:expdAscoass}).\\
Since $\alpha_{0,2}$ has vertical degree $0$ and $\alpha_{1,1}$ has vertical degree $1$ a $\Lambda(\dAs)^{\antishriek}$-coderivation is a map that is a coderivation with respect to the copoduct $\omega^{0,2}$ and anticommutes with $\omega^{1,1}$.
\end{proof}

Applying this to $C= (\dAs)^{\antishriek}(A)$ and $E= (\dAs)^{\antishriek}(A;M)$ we find the following.

\begin{proposition}
The $(\dAs)^{\antishriek}$-representation $(\dAs)^{\antishriek}(A;M)$ corresponds to the $\kfield[x]\otimes \overline{T}^c(sA)$-bicomodule structure on  $\kfield[x] \otimes T^c(sA) \otimes sM \otimes T^c(sA)$ given by 
\begin{eqnarray*}
&&\Delta^L(x^i \otimes (sa_1 , ... , sa_{j-1} , sm , sa_{j+1} , ... , sa_n))\\
&& 
\qquad= \sum_{k=1}^{j-1} \sum_{r+s=i}  (-1)^{\epsilon}(x^r \otimes (sa_1 ,..., sa_k)) \otimes (x^s \otimes (sa_{k+1} ,... , sm ,..., sa_n)),\\
&&\Delta^R (x^i \otimes (sa_1 , ... ,s a_{j-1} , sm , sa_{j+1} , ... , sa_n))\\
&&
\qquad =\sum_{k=j}^{n} \sum_{r+s=i}  (-1)^{\epsilon}(x^r \otimes (sa_1 ,... , sm ,..., sa_k)) \otimes (x^s \otimes (sa_{k+1} ,... , sa_n)),
\end{eqnarray*}
with $\epsilon = r(n+k)+(s,s)(\vert a_1\vert +...+ \vert a_k \vert),$
together with the map
\begin{eqnarray*}
f &\colon& \kfield[x]\otimes T^c(sA) \otimes sM \otimes T^c(sA) \ra \kfield[x]\otimes T^c(sA) \otimes sM \otimes T^c(sA), \\
&& x^i \otimes (sa_1 , ... , sm ,... sa_n) \mapsto (-1)^{n+1} x^{i-1} \otimes (sa_1 , ... , sm ,... sa_n )
\end{eqnarray*}
with $x^{-1}=0$.
\end{proposition}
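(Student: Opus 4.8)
The plan is to specialize the preceding proposition to the cofree case $C=(\dAs)^{\antishriek}(A)$ and $E=(\dAs)^{\antishriek}(A;M)$ and then make the resulting bicomodule structure fully explicit, exactly as Example~\ref{ex:FreedAsCoalg} did for the one-sided coalgebra structure. First I would pass to suspensions: by the preceding proposition it suffices to describe the $\Lambda(\dAs)^{\antishriek}$-representation $sE\cong\Lambda(\dAs)^{\antishriek}(sA;sM)$ of the $\Lambda(\dAs)^{\antishriek}$-coalgebra $sC\cong\Lambda(\dAs)^{\antishriek}(sA)$. Using the identification $\Lambda(\dAs)^{\antishriek}(X)\cong\kfield[x]\otimes\overline{T}^c(X)$ from the proof of Proposition~\ref{prop:dAscoalg}, together with its representation analogue
$$\Lambda(\dAs)^{\antishriek}(X;Y)\cong\bigoplus_{u,v}\kfield\,\alpha_{uv}\otimes\Bigl(\bigoplus_{a+b+1=v}X^{\otimes a}\otimes Y\otimes X^{\otimes b}\Bigr)\cong\kfield[x]\otimes T^c(X)\otimes Y\otimes T^c(X),$$
applied with $X=sA$ and $Y=sM$, I can read off the bicomodule directly from the structure map of the cofree representation, which by definition is the comultiplication of $\Lambda(\dAs)^{\antishriek}$.

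The key step is then to split this comultiplication according to whether the distinguished slot carrying the representation element $sm$ lands in the left or the right tensor factor. Writing $\omega$ out through the explicit cooperad coproduct of Corollary~\ref{cor:lambda}, the $x$-variable splits by $\Delta(x^i)=\sum_{r+s=i}x^r\otimes x^s$, inherited from $\mcDx^{\antishriek}$, while the word $(sa_1,\dots,sm,\dots,sa_n)$ deconcatenates at each position $k$. Collecting the terms with $k<j$, for which $sm$ lies in the right factor, produces the left coaction $\Delta^L\colon sE\to sC\otimes sE$, and the terms with $k\ge j$ produce $\Delta^R\colon sE\to sE\otimes sC$. The accompanying signs are exactly those generated by the Koszul rule through Corollary~\ref{cor:lambda} and computed as in Example~\ref{Ex:signs}; the same bookkeeping that produced the sign in the formula for $\Delta$ in Example~\ref{ex:FreedAsCoalg}, now carried out with $sm$ inserted into one of the slots, delivers the stated value $\epsilon=r(n+k)+(s,s)(|a_1|+\cdots+|a_k|)$.

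Finally, the map $f=s\,\omega^{1,1}$ comes from the arity-one part of the structure map. Since $\alpha_{1,1}$ lowers the $x$-power by one and acts on a decorated word of total length $n$, the computation is identical to that of $f$ in Example~\ref{ex:FreedAsCoalg} and yields $f(x^i\otimes w)=(-1)^{n+1}x^{i-1}\otimes w$ with $x^{-1}=0$, the sign reflecting the suspension of a length-$n$ word. The compatibilities $(f_{sC}\otimes sE)\Delta^L=\Delta^L f_{sE}=(sC\otimes f_{sE})\Delta^L$ and their right-handed analogues then follow from equations~(\ref{eq:expdAscoder1}) and~(\ref{eq:expdAscoder2}) of the preceding proof. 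I expect the only real obstacle to be the sign bookkeeping: one must verify that the Koszul signs accrued in moving $x$-powers and the element $sm$ past tensor factors assemble into precisely the stated $\epsilon$ and $(-1)^{n+1}$, and that they are consistent with the bicomodule axioms. I would organize this by reducing everything to the already-verified one-sided formulas of Example~\ref{ex:FreedAsCoalg}, treating the two coactions as the left and right halves of a single deconcatenation.
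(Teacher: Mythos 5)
Your proposal is correct and follows exactly the route the paper intends: the paper states this proposition with no separate proof, presenting it as the specialization of the preceding equivalence (representations $\leftrightarrow$ bicomodules with $f$) to the cofree case $C=(\dAs)^{\antishriek}(A)$, $E=(\dAs)^{\antishriek}(A;M)$, where the structure map is the cooperad comultiplication made explicit via Corollary~\ref{cor:lambda}, just as Example~\ref{ex:FreedAsCoalg} did for the coalgebra. Your splitting of the deconcatenation by the position of $sm$ into $\Delta^L$ ($k<j$) and $\Delta^R$ ($k\ge j$), the identification $f=s\omega^{1,1}$, and the sign bookkeeping (noting that the shift from $rn+ik$ with entries in $sA$ to $r(n+k)$ with degrees measured in $A$ is accounted for by the suspensions) are precisely the computation the paper leaves to the reader.
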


\begin{proposition}\label{prop:splittingUpCoderivations}
Let $d$ be a coderivation of $\Lambda (\dAs)^{\antishriek}(sA)$ giving rise to a family  $d_i$  of coderivations on $\overline{T}^c(sA)$ as discussed in Example~\ref{ex:FreedAsCoalg}. Giving a $\Lambda(\dAs)^{\antishriek}$-coderivation $g$ on $\Lambda(\dAs)^{\antishriek}(sA;sM)$ is equivalent to specifying a family of maps 
$$g_j \colon T^c(sA) \otimes sM \otimes T^c(sA) \ra T^c(sA) \otimes sM \otimes T^c(sA), \quad j \geq 0,$$
of bidegree $(-j,1-j)$ such that $g_j$ is a $\overline{T}^c(sA)$-bicomodule coderivation with respect to $d_j$.
\end{proposition}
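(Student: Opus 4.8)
The plan is to carry over, to the two-sided setting, the analysis of operadic coderivations performed for coalgebras in Example~\ref{ex:FreedAsCoalg}. By the preceding proposition I identify $\Lambda(\dAs)^{\antishriek}(sA;sM)$ with $\kfield[x]\otimes R$, where $R=T^c(sA)\otimes sM\otimes T^c(sA)$, carrying the explicit $\kfield[x]\otimes\overline{T}^c(sA)$-bicomodule structure $(\Delta^{L},\Delta^{R})$ together with the shift map $f$. As recorded just above, a $\Lambda(\dAs)^{\antishriek}$-coderivation $g$ of bidegree $(0,1)$ is exactly a map which is a coderivation with respect to the bicomodule structure $(\Delta^{L},\Delta^{R})$ and which anti-commutes with $f$ (since $\omega^{0,2}$ has vertical degree $0$ while $\omega^{1,1}=f$ has vertical degree $1$). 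So I would expand $g(x^{n}\otimes w)=\sum_{i}x^{i}\otimes g^{n,i}(w)$ for $w\in R$ and decompose $g^{n,i}(w)=\sum_{k}g^{n,i,k}(w)$ according to the word length $k$ of the output.

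First I would use the anti-commutation with $f$. Since $f(x^{i}\otimes w)=(-1)^{\ell+1}x^{i-1}\otimes w$ for $w$ of word length $\ell$, comparing the coefficients of each power of $x$ in the identity $gf=-fg$ yields the recursion $g^{n,i,k}=(-1)^{\ell+k+1}g^{n-1,i-1,k}$, exactly as for the coalgebra coderivation $d$ in Example~\ref{ex:FreedAsCoalg}. Iterating gives $g^{n,i,k}=(-1)^{i(\ell+k+1)}g^{n-i,0,k}$ for $i\le n$ and $g^{n,i,k}=0$ for $i>n$, so that $g$ is completely determined by its $x^{0}$-components $g^{n,0}=\pi_{0}\,g(x^{n}\otimes-)$. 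Accordingly I set $g_{j}(w)=(-1)^{j\ell}\pi_{0}\,g(x^{j}\otimes w)$ for $w$ of word length $\ell$; a bidegree count, using that $x$ has bidegree $(-1,-1)$ and $g$ has bidegree $(0,1)$, shows that each $g_{j}$ has bidegree $(-j,1-j)$.

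Next I would translate the bicomodule coderivation condition on $g$ into the asserted condition on the $g_{j}$. The key points, as in Example~\ref{ex:FreedAsCoalg}, are the intertwining relations $\Delta^{L}\pi_{0}=(\pi_{0}\otimes\pi_{0})\Delta^{L}$ and $\Delta^{R}\pi_{0}=(\pi_{0}\otimes\pi_{0})\Delta^{R}$, where on each left-hand side $\Delta^{L}$ and $\Delta^{R}$ denote the plain deconcatenation coactions of $\overline{T}^c(sA)$ on $R$. Projecting the left- and right-comodule coderivation identities for $g$ by $\pi_{0}$ and inserting the sign $(-1)^{j\ell}$, together with the fact that the $x^{0}$-part of the coalgebra coderivation $d$ recovers $d_{j}$ under the analogous sign convention, shows precisely that $\Delta^{L}g_{j}=(d_{j}\otimes 1+1\otimes g_{j})\Delta^{L}$ and $\Delta^{R}g_{j}=(g_{j}\otimes 1+1\otimes d_{j})\Delta^{R}$, that is, that $g_{j}$ is a $\overline{T}^c(sA)$-bicomodule coderivation with respect to $d_{j}$. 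For the converse, given such a family $(g_{j})_{j\ge 0}$, I would define $g$ by $g^{n,0}=(-1)^{n\ell}g_{n}$ and the reconstruction formula $g^{n,i,k}=(-1)^{i(\ell+k+1)}g^{n-i,0,k}$; by construction $g$ anti-commutes with $f$, and reversing the computation above shows it is a bicomodule coderivation, hence a genuine $\Lambda(\dAs)^{\antishriek}$-coderivation.

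The main obstacle will be the sign bookkeeping combined with the simultaneous handling of both coactions: one must check that the single $x$-shift $f$ is compatible with $\Delta^{L}$ and $\Delta^{R}$ at once, and that the correction sign $(-1)^{j\ell}$ is exactly what converts the $x$-twisted coactions $(\Delta^{L},\Delta^{R})$ on $\kfield[x]\otimes R$ into the honest deconcatenation coactions on $R$. Verifying the intertwining relations $\Delta^{L}\pi_{0}=(\pi_{0}\otimes\pi_{0})\Delta^{L}$ and $\Delta^{R}\pi_{0}=(\pi_{0}\otimes\pi_{0})\Delta^{R}$ from the explicit formulas of the preceding proposition, in particular confirming that the $\epsilon$-signs there collapse correctly upon projecting to the $x^{0}$ summand, is where the real content lies; the remainder runs parallel to the coalgebra computation of Example~\ref{ex:FreedAsCoalg} line for line.
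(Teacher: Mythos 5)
Your proposal is correct and follows essentially the same route as the paper's proof: decompose $g$ into components $g_{i,j}$ indexed by powers of $x$, use anti-commutation with $f$ to show $g$ is determined by its $x^0$-components, and define $g_j$ with exactly the sign twist $(-1)^{j\ell}$ (the paper's $(-1)^{rn}$) before invoking the bicomodule-coderivation correspondence. The only difference is that you spell out the verification via the intertwining relations $\Delta^{L}\pi_{0}=(\pi_{0}\otimes\pi_{0})\Delta^{L}$ and $\Delta^{R}\pi_{0}=(\pi_{0}\otimes\pi_{0})\Delta^{R}$ and the converse reconstruction, which the paper compresses into a single ``if and only if'' sentence.
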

\begin{proof}
Denote by $g_{i,j}$ the component
$$ \kfield x^{i} \otimes T^c(sA) \otimes sM \otimes T^c(sA) \ra \kfield x^{j} \otimes T^c(sA) \otimes sM \otimes T^c(sA)$$
of $g$. Since $g$ has to anti-commute with $f$ we see that
$$f^j g_{i,j} = \begin{cases}
(-1)^j g_{i-j,0} f^j, & i\geq j,\\
0, & j>i
\end{cases}$$
and hence that $g$ is completely determined by the maps $g_{r,0}$. 
Define $g_r $ by
$$g_r(sa_1,...,sa_{i-1},sm,sa_{i+1},...,sa_n) = (-1)^{rn}g_{r,0}(x^r \otimes (sa_1,...,sm,...,sa_n)).$$
Then the $g_r$ are bicomodule coderivations if and only if $g$ is a $\Lambda (\dAs)^{\antishriek}$-coderivation.
\end{proof}

Applying Proposition \ref{P:rep_coder} to the case where $\mathcal P=\dAs$ we get that a representation $M$ of a derived
$A_\infty$-algebra $A$ is entirely determined by a square-zero coderivation $g$ of the representation
$(\dAs)^{\antishriek}(A;M)$ of the $(\dAs)^\antishriek$-coalgebra $(\dAs)^{\antishriek}(A)$
(endowed itself with the square-zero
derivation $d$ defining the $A_\infty$-algebra structure on $A$).
In Proposition \ref{prop:splittingUpCoderivations} we have described explicitly the coderivation $g$. In the next theorem, we characterize the square-zero
coderivations.

\begin{theorem}
Let $A$ be a $dA_{\infty}$-algebra, and let $h_i \colon \overline{T}^c(sA) \ra \overline{T}^c(sA)$ be the corresponding coderivations making $\overline{T}^c(sA)$ a twisted chain complex as discussed in Proposition~\ref{prop:dAInfAlgAsTw}. Then endowing a bigraded $\kfield$-module $M$ with the structure of a $dA_{\infty}$-representation of $A$ is equivalent to giving maps 
$$g_i \colon T^c(sA) \otimes sM \otimes T^c(sA) \ra T^c(sA) \otimes sM \otimes T^c(sA), \quad i\geq 0,$$
of bidegree $(-i,1-i)$ such that
\begin{itemize}
\item the $g_i$ make $T^c(sA) \otimes M \otimes T^c(sA)$ a twisted chain complex,
\item for all  $i\geq 0$ the map $g_i$ is a bicomodule coderivation with respect to $h_i$.
\end{itemize}
\end{theorem}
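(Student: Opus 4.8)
The plan is to combine the two structural results already established, namely Proposition~\ref{P:rep_coder} and Proposition~\ref{prop:splittingUpCoderivations}, and then to identify exactly what the square-zero condition translates into. By Proposition~\ref{P:rep_coder} applied to $\mcP = \dAs$, endowing $M$ with a $dA_\infty$-representation structure on $A$ is equivalent to giving a square-zero coderivation $g$ of the representation $(\dAs)^{\antishriek}(A;M)$ of the $(\dAs)^{\antishriek}$-coalgebra $(\dAs)^{\antishriek}(A)$, the latter carrying the square-zero coderivation $d$ that encodes the $dA_\infty$-structure on $A$. By Proposition~\ref{prop:splittingUpCoderivations}, such a coderivation $g$ is equivalent to a family of maps $g_i \colon T^c(sA) \otimes sM \otimes T^c(sA) \ra T^c(sA) \otimes sM \otimes T^c(sA)$ of bidegree $(-i,1-i)$, each $g_i$ being a $\overline{T}^c(sA)$-bicomodule coderivation with respect to the coderivation $h_i$ coming from $d$ (under the identification of Proposition~\ref{prop:dAInfAlgAsTw}). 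So the second bullet point is immediate from Proposition~\ref{prop:splittingUpCoderivations}, and the only remaining task is to show that the condition $g^2=0$ on the single coderivation $g$ is equivalent to the twisted chain complex conditions on the family $\{g_i\}$.

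First I would recall, exactly as in the proof of Proposition~\ref{prop:dAInfAlgAsTw}, the bookkeeping for $g^2$ in terms of the components $g_{i,j}$ relative to the $\kfield[x]$-grading. Writing $g$ in terms of its components $g_{i,j}\colon \kfield x^i \otimes (\cdots) \ra \kfield x^j \otimes (\cdots)$ and using that $g$ anti-commutes with $f$ (so that $g_{i,j}$ is determined by $g_{r,0}$ via $f$, precisely as recorded in the proof of Proposition~\ref{prop:splittingUpCoderivations}), the composite $g^2$ expands as $g^2(x^n \otimes z) = \sum_{r,s} x^s \otimes g_{r,s}g_{n,r}(z)$. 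The plan is to project onto the $x^0$-component, which is where the essential information lives: by the analogue of \cite[6.3.8]{LV12} a square-zero coderivation is detected on its corank-one projection, so $g^2 = 0$ holds if and only if its projection to $sM$ sitting inside the $x^0$-part vanishes.

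Then I would carry out the sign computation converting the vanishing of the $x^0$-component of $g^2$ into the twisted chain complex relation on the $g_i$. This is the direct analogue of the displayed chain of equivalences in the proof of Proposition~\ref{prop:dAInfAlgAsTw}: using the definition $g_r(z) = (-1)^{rn}g_{r,0}(x^r \otimes z)$ for $z \in T^c(sA)\otimes sM \otimes T^c(sA)$ concentrated in the appropriate arity $n$, together with the relation $f^r g_{i,r} = (-1)^r g_{i-r,0}f^r$, the condition $\sum_r g_{r,0}g_{n,r} = 0$ unwinds, after cancelling the sign $(-1)^{nj}$, into $\sum_r (-1)^r g_r g_{n-r} = 0$, which is exactly the twisted chain complex condition making $T^c(sA) \otimes M \otimes T^c(sA)$ a twisted chain complex. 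This gives the first bullet point.

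The main obstacle I expect is the sign bookkeeping, rather than any conceptual difficulty: one must track the $(-1)^{rn}$ normalization relating $g_r$ to $g_{r,0}$, the sign $(-1)^r$ in the anti-commutation of $g$ with $f$, and the vertical-degree signs attached to the bicomodule coderivation structure, and verify that they assemble into precisely the alternating sign $(-1)^r$ of the twisted chain complex equation. Since every ingredient is formally parallel to the already-proved Proposition~\ref{prop:dAInfAlgAsTw} — with $\overline{T}^c(sA)$ replaced by the bicomodule $T^c(sA)\otimes sM \otimes T^c(sA)$ and coderivations replaced by bicomodule coderivations — I would explicitly note this parallel and reproduce only the sign calculation that differs, keeping in mind that the identification of $g^2=0$ with the twisted chain complex conditions, together with the detection-on-the-cogenerator principle, closes the argument in both directions.
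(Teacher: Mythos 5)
Your proposal is correct and takes essentially the same approach as the paper's proof: it combines Propositions \ref{P:rep_coder} and \ref{prop:splittingUpCoderivations}, and then translates $g^2=0$ into the twisted chain complex conditions on the $g_i$ by expanding $g$ in its components $g_{i,j}$, using the anti-commutation with $f$, projecting to the $x^0$-component, and invoking Proposition \ref{prop:DerHom} to conclude that vanishing of this projection forces $g^2=0$, exactly as the paper does (up to running the sign computation in the opposite direction). The only step left implicit, as it also is in the paper, is that $g^2$ is itself a coderivation (the unnumbered lemma on squares of odd-degree coderivations), which is what licenses applying the cogenerator-detection principle to $g^2$.
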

\begin{proof}
We saw how to construct the maps $g_i$ from a coderivation $g \colon  \Lambda(\dAs)^{\antishriek}(A;M) \ra \Lambda(\dAs)^{\antishriek}(A;M)$ in the proof of Proposition~\ref{prop:splittingUpCoderivations}. The $g_i$ define a twisted chain complex if and only if for all $u \geq 0$ and all $(sa_1 ,... , sm , ..., sa_n) \in T^c(sA) \otimes sM  \otimes T^c(sA)$
\begin{eqnarray*}
0 
& = & 
\sum_{i+p=u} (-1)^i g_i g_p (sa_1,..., sm,..., sa_n)\\
& = & 
\sum_{i+p=u} (-1)^{i+pn} g_i g_{p,0} (x^p \otimes (sa_1,..., sm,..., sa_n))\\
& = & 
\sum_{i+p=u} (-1)^{i+pn+i(n+1)} g_i g_{p,0} f^i(x^{p+i} \otimes (sa_1,..., sm,..., sa_n))\\
& = & 
\sum_{i+p=u} (-1)^{pn+i(n+1)} g_i f^i g_{p+i,i} (x^{p+i} \otimes (sa_1,..., sm,..., sa_n)).\\
\end{eqnarray*}
But $g_i f^i = (-1)^i g_{i,0}$ on $\kfield x^i\otimes T^c(sA)\otimes sM\otimes T^c (sA)$, hence the $g_i$ yield a twisted chain complex if and only if
$$0= \sum_{i+p=u} (-1)^{un} g_{i,0} g_{p+i,i}.$$
Hence the projection of $g^2$ to $\kfield x^0\otimes T^c(sA) \otimes sM \otimes T^c(sA)$ is zero, and Proposition~\ref{prop:DerHom} yields that $g^2=0$ in general.
\end{proof}

\begin{remark}\rm
In \cite[6.2]{Sag10} Sagave defines a module over a $\dAs_{\infty}$-algebra $A$ as a bigraded $\kfield$-module $M$ such that $sM\otimes T^c(sA)$ is a twisted chain complex whose $i$-th structure map $g_i$ is a right $\overline{T}^c(sA)$-coderivation with respect to $h_i$. The operadic notion of representation hence yields a two-sided variant of Sagave's definition.
\end{remark}

\section{New example of a derived $A_\infty$-algebra}
\label{sec:examples}

In this section, we will use a family of examples of finite dimensional $A_\infty$-algebras given by Alloca and Lada in~\cite{Allocca10} in 
order to build a new example of a 3-dimensional derived $A_\infty$-algebra.

\subsection{Examples of finite dimensional $A_\infty$-algebras}

Alloca and Lada give in  \cite{Allocca10} a family of examples of $A_\infty$-algebras. Taking a subalgebra, one gets the following result as a corollary of~\cite[Theorem 2.1]{Allocca10}. Here, the sign conventions for  $A_\infty$-algebras
are those of Loday-Vallette.

\begin{proposition}\label{Aex}
The free graded $\kfield$-module $V$ spanned by $x$ of degree $0$ and $y$ of degree $1$ is an $A_\infty$-algebra 
with $\kfield$-linear structure maps satisfying:
\begin{align*}
m_1(x) & = y,\\
m_n(x \otimes y^{\otimes k} \otimes x \otimes y^{(n-2)-k} )&= (-1)^ks_n x, & \textrm{ for } 0 \leq k \leq n-2,\\
m_n(x \otimes y^{n-1}) &= s_{n+1}y, &
\end{align*}
where $s_n = (-1)^{(n+1)(n+2)/2}$, and $m_n(z)= 0$ for any $n$ and any basis element $z \in V^{\otimes n}$  not listed above. \qed
\end{proposition}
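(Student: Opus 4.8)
The plan is to obtain this as a direct consequence of the Allocca--Lada family of finite-dimensional $A_\infty$-algebras in \cite[Theorem 2.1]{Allocca10}, by passing to a sub-$A_\infty$-algebra. First I would recall the precise statement of \cite[Theorem 2.1]{Allocca10}, which equips a certain finite-dimensional graded module (with distinguished basis including the elements denoted $x$ and $y$ here) with structure maps given by explicit formulas. A useful first sanity check is a degree count: with the Loday--Vallette convention in which $m_n$ has degree $2-n$, the value $m_1(x)$ sits in degree $1 = |y|$, the value $m_n(x \otimes y^{\otimes k}\otimes x\otimes y^{(n-2)-k})$ sits in degree $0$, and $m_n(x\otimes y^{n-1})$ sits in degree $1$. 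This confirms that the right-hand sides of the claimed formulas live in the correct degrees, and that outputs are forced to be multiples of $x$ or of $y$ respectively.

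The key step is to verify that the submodule $V$ spanned by $x$ and $y$ is closed under all of the operations of the ambient Allocca--Lada algebra, that is $m_n(V^{\otimes n})\subseteq V$ for every $n\ge 1$. Concretely this means checking, on each basis tensor word built from $x$ and $y$, that the Allocca--Lada formula produces an output lying in $V$ rather than involving any of the other basis elements of their algebra. The listed formulas exhibit exactly which words contribute, namely $x\otimes y^{\otimes k}\otimes x\otimes y^{(n-2)-k}$ and $x\otimes y^{n-1}$, with all remaining words mapping to $0$. Once closure is established, $V$ inherits an $A_\infty$-structure for free: the Stasheff relations $\sum \pm\, m_i(1^{\otimes r}\otimes m_j\otimes 1^{\otimes t}) = 0$ hold on $V^{\otimes n}$ because they already hold in the ambient algebra and, by closure, every intermediate term stays inside $V$. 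Thus no independent re-verification of the $A_\infty$-identities is required.

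It then remains to read off the restricted structure maps and match them against the stated formulas, including the sign $s_n = (-1)^{(n+1)(n+2)/2}$. Here I would align Allocca--Lada's sign conventions with the Loday--Vallette conventions used in this paper, using the translation recorded in the appendix. The sign bookkeeping, in particular reconciling the internal factor $(-1)^k$ coming from the position of the second $x$ in $x\otimes y^{\otimes k}\otimes x\otimes y^{(n-2)-k}$ with the global factor $s_n$, is the part most prone to error and is what I expect to be the main obstacle. The remainder is routine substitution into the formulas of \cite[Theorem 2.1]{Allocca10}.
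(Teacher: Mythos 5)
Your proposal matches the paper's own argument: the paper obtains this proposition exactly as a corollary of \cite[Theorem 2.1]{Allocca10} by passing to the sub-$A_\infty$-algebra spanned by $x$ and $y$ (the statement carries a \qed with no further proof), with the sign conventions translated to those of Loday--Vallette as discussed in the appendix. Your additional points --- checking closure $m_n(V^{\otimes n})\subseteq V$ so that the Stasheff relations are inherited, and the degree count --- are precisely the routine verifications the paper leaves implicit.
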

\begin{remark}\rm
 If we modify the above example so that $m_1 = 0$, but everything else is unchanged, then $V$ is still an 
 $A_\infty$-algebra. That is, we can construct a minimal example from the one above,  where we recall that a minimal  
$A_\infty$-algebra $A$ is an $A_\infty$-algebra such that $m_1=0$.
\end{remark}
\sline

\subsection{Example of a derived $A_\infty$-algebra}

We describe an example of a derived $A_\infty$-structure on a rank 3 free bigraded $\kfield$-module $V$ spanned by $u,v,w$ where
$|u| = (0,0), |v| = (-1,0),$ and $|w| = (0,1)$. 

\smallskip

Note that if $V$ is as above, the bidegree $(-k,l)$ of an element $z\in V^{\otimes j}$ satisfies $0\leq k,0\leq l$ and $k+l\leq j$. 
Since the structure map $m_{in}:V^{\otimes n}\rightarrow V$ is of bidegree
$(-i,2-i-n)$, the element $m_{in}(z)$ has bidegree $(-k-i,2-i-n+l)$. This has the following consequence.

\begin{proposition} If the bigraded $\kfield$-module $V$ as above is endowed with a derived $A_\infty$-structure then,
for reasons of bidegree,  $m_{in}(z)$ with $z \in V^{\otimes n}$ can be potentially non-zero only if
$0\leq i\leq 1$. Furthermore, letting $z=x_1\otimes\cdots\otimes x_n$ where each $x_l$ is one of the basis
elements of $V$, we have the following.
\begin{enumerate}
\item If $m_{0n}(z)\not=0$, then there exist $i\not=j$ such that  $x_k=w$ for $k\not\in\{i,j\}$ and
 $(x_i,x_j)\in\{(u,u),(u,w),(w,u),(u,v),(v,u)\}$.
\item If $m_{1n}(z)\not=0$,   then there exists $i$ such that $x_i=u$ and $x_k=w$ for $k\not=i$.\qedhere
\end{enumerate}\qed
\end{proposition}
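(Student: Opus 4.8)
The plan is to make the bidegree bookkeeping from the paragraph preceding the statement completely explicit and then read off the two lists as the only solutions to a system of inequalities. Recall the three generators have bidegrees $|u|=(0,0)$, $|v|=(-1,0)$, $|w|=(0,1)$, and the structure map $m_{in}$ has bidegree $(-i,2-i-n)$. First I would fix a tensor $z=x_1\otimes\cdots\otimes x_n$ in which $u$ occurs $a$ times, $v$ occurs $b$ times, and $w$ occurs $c$ times, so that $a+b+c=n$. Then the bidegree of $z$ is $(-b,c)$, since only $v$ contributes to the horizontal degree and only $w$ to the vertical. Consequently $m_{in}(z)$ lands in bidegree $(-b-i,\,c+2-i-n)=(-(b+i),\,c+2-i-n)$.

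Next I would use the fact that $V$ is concentrated in the three bidegrees $(0,0),(-1,0),(0,1)$, so a nonzero output forces $m_{in}(z)$ to have one of these three bidegrees. The horizontal coordinate $-(b+i)$ must therefore lie in $\{-1,0\}$, giving $b+i\in\{0,1\}$. Since $b\ge 0$ and $i\ge 0$, this already yields $0\le i\le 1$, proving the first assertion, and it splits into the cases $(b,i)\in\{(0,0),(0,1),(1,0)\}$. I would then impose the vertical constraint: $c+2-i-n$ must lie in $\{0,1\}$, i.e. $c+2-i-n\in\{0,1\}$, equivalently $n-c\in\{2-i,1-i\}$. Writing $n-c=a+b$, this says $a+b\in\{1-i,2-i\}$. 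Combining with the horizontal case analysis is where the two enumerated lists emerge, and this is the step that requires the most care.

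\emph{Proof of (1).} Here $i=0$, so $b\in\{0,1\}$ and $a+b\in\{1,2\}$. If the output is to match the proposition one checks the two subcases. When $b=0$ we need $a\in\{1,2\}$ with $c=n-a$, and the surviving outputs have bidegree $(0, c+2-n)=(0,2-a)$; matching $(0,0)$ forces $a=2$ (two $u$'s, rest $w$), while matching $(0,1)$ forces $a=1$ but then the output has vertical degree $1$ with a single $u$ and the remaining $n-1$ letters all $w$, which is the case $(u,w,\dots)$ up to the position of $u$. When $b=1$ we need $a\in\{0,1\}$; the sub-subcase giving a permissible bidegree forces exactly one $u$ and one $v$ with the remaining letters $w$. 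Assembling these and recording which two positions $i\ne j$ carry the non-$w$ letters gives precisely the list $(x_i,x_j)\in\{(u,u),(u,w),(w,u),(u,v),(v,u)\}$; the ordered pairs $(u,w)$ and $(w,u)$, and $(u,v)$ and $(v,u)$, appear because only the unordered content is fixed by the degree argument.

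\emph{Proof of (2).} Here $i=1$, so $b+1\in\{0,1\}$ forces $b=0$, i.e. no $v$ occurs. The vertical condition $a+b\in\{1-i,2-i\}=\{0,1\}$ then gives $a\in\{0,1\}$; but $a=0$ would make $m_{1n}(z)$ a function of $w$'s alone, which one checks lands in the wrong bidegree (or is excluded since such outputs must vanish), leaving $a=1$: exactly one letter is $u$ and all others are $w$, which is the asserted description. I expect the main obstacle to be not the inequalities themselves but the careful cross-checking that each surviving bidegree actually corresponds to one of the three occupied bidegrees of $V$ \emph{and} that no further arithmetic coincidences produce spurious cases; in particular one must verify the edge behaviour for small $n$ and confirm that the two ``potentially non-zero'' lists are exactly those not forbidden by degree, the actual nonvanishing being governed separately by the explicit structure maps. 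Throughout, the argument is purely a finite case analysis on the triple $(a,b,c)$ subject to $a+b+c=n$ and the two membership conditions, so no genuinely hard computation is involved once the bigrading of $z$ is pinned down.
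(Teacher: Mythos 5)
Your proof is correct and is essentially the paper's own argument: the paper justifies this proposition purely by the bidegree bookkeeping in the preceding paragraph (a tensor with $b$ copies of $v$ and $c$ copies of $w$ has bidegree $(-b,c)$, so $m_{in}(z)$ has bidegree $(-b-i,\,c+2-i-n)$, which must equal one of the three occupied bidegrees $(0,0)$, $(-1,0)$, $(0,1)$), and your case analysis on $(a,b,c)$ is exactly this argument made explicit. In particular you correctly identify the one point requiring care, namely ruling out the combination $(-1,1)$, which is allowed coordinatewise but is not an occupied bidegree of $V$.
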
		

\begin{proposition}Let $V$ be the  rank 3 free bigraded $\kfield$-module as above.
Then $V$ is endowed with the following derived $A_\infty$-structure.
For $n\geq 2$,  we let
\begin{align*}
m_{0n}(u \otimes w^{\otimes k} \otimes u \otimes w^{\otimes (n-2)-k} )&= (-1)^ks_nu, & \text{ for } 0 \leq k \leq n-2,\\
m_{0n}(u \otimes w^{\otimes n-1}) &= s_{n+1}w,   \\
m_{0n}(u \otimes w^{\otimes n-2} \otimes v  )&= (-1)^{n-2}s_n v,
\end{align*}
and for $n\geq 1$, we let
\begin{align*}
m_{11}(u) &= v, \\
m_{1n}(u \otimes w^{ \otimes n-1}) &= s_{n+1}v,\\
\end{align*}
where $s_n = (-1)^{(n+1)(n+2)/2}$ and we let $m_{ij}(z)= 0$ for any $i,j$ and for any basis element $z \in V^{\otimes j}$  not covered by the cases above. 
\end{proposition}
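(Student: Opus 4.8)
The plan is to verify the defining equations~\eqref{dobjectequation} of Definition~\ref{defn:dAinftystr} directly, exploiting the fact, established in the preceding proposition, that the only potentially nonzero structure maps have $i\in\{0,1\}$. For a fixed target, each term $m_{ij}(1^{\otimes r}\otimes m_{pq}\otimes 1^{\otimes t})$ in~\eqref{dobjectequation} has outer and inner horizontal indices summing to $u=i+p$, so the equation splits into three independent families according to whether $u=0$, $u=1$ or $u=2$. Equivalently, and more conceptually, I would invoke Proposition~\ref{prop:dAInfAlgAsTw}: the family $\{m_{0n}\}$ and the family $\{m_{1n}\}$ assemble into coderivations $\delta^0$ and $\delta^1$ of $\overline{T}^c(sV)$ of bidegrees $(0,1)$ and $(-1,0)$, and the derived $A_\infty$-axioms become exactly the twisted chain complex conditions of Definition~\ref{def:twistedchaincx}, namely $(\delta^0)^2=0$, $\delta^0\delta^1=\delta^1\delta^0$ and $(\delta^1)^2=0$, matching the cases $u=0,1,2$ respectively.

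The case $u=2$ is vacuous: any such composite has horizontal degree $\le -2$, but $V$ is concentrated in horizontal degrees $0$ and $-1$, so every term vanishes (this is $(\delta^1)^2=0$ for degree reasons). For the case $u=0$ one has $i=p=0$, and~\eqref{dobjectequation} is precisely the statement that the operations $m_{0n}$ ($n\ge 2$) make $V=\langle u,w\rangle\oplus\langle v\rangle$ into an $A_\infty$-algebra. I would observe that this is a square-zero extension, since two $v$-inputs would force the output into horizontal degree $\le -2$; hence the equations split into (i) the $A_\infty$-relations of the subalgebra $\langle u,w\rangle$ and (ii) the relations making $\langle v\rangle$ an $A_\infty$-bimodule with trivial right action, that is, a left $A_\infty$-module. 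For (i), the maps $m_{0n}|_{\langle u,w\rangle}$ coincide, under $u\leftrightarrow x$ and $w\leftrightarrow y$, with the minimal $A_\infty$-algebra of the remark following Proposition~\ref{Aex}; here one should first reconcile the Loday--Vallette signs of Proposition~\ref{Aex} with the Sagave convention of Definition~\ref{defn:dAinftystr} using the appendix, but on the horizontal-degree-$0$ part these agree. For (ii), the action $m_{0n}(u\otimes w^{\otimes n-2}\otimes v)=(-1)^{n-2}s_n v$ is, under $v\leftrightarrow x$, exactly the left action of $\langle x\rangle$ viewed as a submodule of the regular representation of that minimal algebra, and so is automatically a left $A_\infty$-module.

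The remaining case $u=1$ is the heart of the matter and I expect it to be the main obstacle. Here $\{i,p\}=\{0,1\}$ and the relation couples the two families, schematically $\sum\pm\,m_{0j}(1^{\otimes r}\otimes m_{1q}\otimes 1^{\otimes t})+\sum\pm\,m_{1j}(1^{\otimes r}\otimes m_{0q}\otimes 1^{\otimes t})=0$, equivalently $\delta^0\delta^1=\delta^1\delta^0$. A bidegree count shows the only inputs to test are words with two $u$'s and all remaining letters $w$, with output a multiple of $v$. The terms $m_{1j}(1^{\otimes r}\otimes m_{0q}\otimes 1^{\otimes t})$ split according to whether the inner $m_{0q}$ multiplies the two $u$'s (producing $u$, which $m_{1j}$ sends to $v$) or absorbs one $u$ together with trailing $w$'s (producing $w$, leaving the other $u$ for $m_{1j}$), while the terms $m_{0j}(1^{\otimes r}\otimes m_{1q}\otimes 1^{\otimes t})$ first create $v$ from one $u$ and then act through the module structure of (ii). I would verify that these match in pairs with opposite signs. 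The crux is the sign bookkeeping: one must carry the Koszul sign $(-1)^{rq+t+pj}$ of~\eqref{dobjectequation} through each reassociation and weigh it against the internal signs $(-1)^k s_n$, $s_{n+1}$ and $(-1)^{n-2}s_n$, which are inherited from the Allocca--Lada family precisely so that the cancellation is forced. The smallest case is indicative: at arity $2$ the term $m_{11}m_{02}(u\otimes u)$ cancels $m_{02}(u\otimes m_{11}(u))$ with signs $+1$ and $-1$. The general case should then follow by the same telescoping once the signs are organized, for instance by comparing with the full Allocca--Lada relations through the identity $m_{1n}=\pi\circ m_{0n}$ for $n\ge 2$, where $\pi\colon\langle w\rangle\to\langle v\rangle$ sends $w\mapsto v$; the genuine content lives in the exceptional term $m_{11}(u)=v$, for which $m_{01}=0$, so that this identity fails at $n=1$ and the coupling relation cannot be deduced purely formally.
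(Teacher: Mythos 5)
Your proposal has the same skeleton as the paper's proof: split equation~(\ref{dobjectequation}) according to the horizontal weight $u=i+p$ into three families (the paper's Relations I--III), dispose of $u=2$ trivially, identify the $v$-free part of the $u=0$ family with the minimal Allocca--Lada algebra of Proposition~\ref{Aex} on $\langle u,w\rangle$, and reduce everything else to sign checks on the few words where terms can survive; your identification of which words need checking matches the paper's. The genuine difference is your treatment of the $v$-containing part of the $u=0$ family: the paper merely asserts that ``a sign computation shows that the expression vanishes'', whereas you derive it structurally by identifying $\langle v\rangle$, via $u\mapsto v$, with the sub-left-module $\langle u\rangle$ of the regular representation of $\langle u,w\rangle$. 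This is correct --- the transport costs no Koszul signs because $v$ and $u$ differ only in horizontal degree while every $m_{0q}$ has horizontal degree zero --- and it is more illuminating, since it explains why the coefficient $(-1)^{n-2}s_n$ is forced. For the mixed family $u=1$ you, like the paper, stop at a sketch; your arity-two cancellation is verified correctly, and your organizing identity $m_{1n}=\pi m_{0n}$ for $n\geq 2$ (with $\pi(w)=v$, $\pi(u)=\pi(v)=0$), with all the genuine content concentrated in $m_{11}(u)=v$, is a sound route to finishing the bookkeeping, so your proposal is no less complete than the paper's proof, which explicitly declines to give full details. One caveat: your parenthetical claim that the Loday--Vallette and Sagave conventions ``agree on the horizontal-degree-$0$ part'' is not right --- Sagave's equation restricted to horizontal degree $0$ is the Getzler--Jones convention, which differs from Loday--Vallette/Stasheff by the regrading $m_n\mapsto(-1)^{n(n-1)/2}m_n$, as the appendix computes; the paper sidesteps this by checking the $\star$-relations of formula~(\ref{def:star}), i.e.\ by working with the generators $\widetilde m_{ij}$, so you should fix one convention and read the example's signs in that convention before invoking Proposition~\ref{Aex}.
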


\begin{proof} The proof is just a computation. We will not give full details, but we supply enough
 ingredients so that the computation can be carried out quickly.

Note that to check that $V$ is a derived $A_\infty$-algebra we only need to check that, for $l \geq 1$ and 
$z\in V^{\otimes l+1}$, the following three conditions hold.

\begin{align*}
\label{rels}
 \sum_{j+q = l+1} m_{0j} \star m_{0q} (z)= 0, \\
 \sum_{j+q = l+1} (m_{0j} \star m_{1q} + m_{1j} \star m_{0q})(z) = 0,\\
  \sum_{j+q = l+1} m_{1j} \star m_{1q}(z) = 0,
\end{align*}
with the $\star$-product defined in the formula (\ref{def:star}) of the appendix.

We consider the three relations in turn, outlining the checking required for each.
\medskip

\textbf{Relation I}\qquad $\sum_{j+q = l+1} m_{0j} \star m_{0q}(z) = 0$.
\sline

Let $V_0 =\langle u, w \rangle$ be the subspace of $V$ spanned by the elements of bidegree $(0,r)$, for $r\in\Z$.  If $V$ is a  derived $A_\infty$-algebra, then $V_0$ is an $A_\infty$-algebra. As a consequence checking the equation on tensors $z$ not containing $v$ is equivalent to checking that $V_0$ is an $A_\infty$-algebra. This is true by Proposition \ref{Aex}. 

It  remains to check the equation on tensors containing $v$. 
For terms containing at least one $v$, $m_{0j}(1^{\otimes *} \otimes m_{0q} \otimes 1^{\otimes *})$ is possibly non-zero only on tensors of the form 
$$u \otimes w^{\otimes k} \otimes u \otimes w^{\otimes l-k-3} \otimes v,  \textrm{  for } 0 \leq k \leq l-3, \text{ where } j+q = l+1,$$ 
and a sign computation shows that the expression vanishes on those terms.

\sline

\textbf{Relation II}\qquad  $\sum_{j+q = l+1} (m_{0j} \star m_{1q} + m_{1j} \star m_{0q}) (z)= 0$.
\sline

This case is similar to the previous one;  $m_{0j}(1^{\otimes *} \otimes m_{1q} \otimes 1^{\otimes *})+m_{1j}(1^{\otimes *} \otimes m_{0q} \otimes 1^{\otimes *})$ is possibly non-zero only on tensors of the form 

$$u \otimes w^{\otimes k} \otimes u \otimes w^{\otimes l-k-2},  \textrm{  for } 0 \leq k \leq l-2, \text{ where } j+q = l+1.$$

\textbf{Relation III}\qquad $\sum_{j+q = l+1} m_{1j} \star m_{1q}(z) = 0$.

\sline 

Since $m_{1n}$ takes values zero or $\pm v$ on basis elements and since $m_{1n}$ applied to a tensor containing a $v$ vanishes, it follows that
 $\sum_{j+q = l+1} m_{1j} \star m_{1q} (z)=0$.  \end{proof}

\begin{remark}\rm
In this example, we have $m_{01} = 0$; that is, we have a minimal derived $A_\infty$-algebra.

For bidegree reasons, the only alternative would be letting $m_{01}(u)$ be (some multiple of) $w$.
However, modifying the above example so that $m_{01}(u) = w$, with everything else unchanged, does
not give a  derived $A_\infty$-algebra. A direct computation shows that we would have
	\[
	\sum_{j+q = 4} (m_{0j} \star m_{1q}+m_{1j} \star m_{0q})(u \otimes w \otimes u) = v \neq 0
	\]
 and 
	\[
	\sum_{j+q = 4} (m_{0j} \star m_{1q}+m_{1j} \star m_{0q})(u \otimes u \otimes w) = -v \neq 0.
	\]

\sline
On the other hand, if we `truncate' the above example, setting $m_{ij} =0$ for $i+j \geq 3$, then it can be checked, 
using SAGE, that we get a bidga, both in the case with $m_{01} = 0$ and also in the case where we modify the example so that $m_{01}(u) = w$.
\end{remark}

\section{Appendix: sign conventions}

In this appendix, we compare different sign conventions relating to 
derived $A_\infty$-algebras. In the special case of
$A_\infty$-algebras such comparisons have been mentioned in the literature.

\subsection{Different conventions for derived $A_\infty$-algebras}

We recall that a derived $A_\infty$-structure on $A$
consists of $\kfield$-linear maps
$m_{ij}: A^{\otimes j} \longrightarrow A$
of bidegree $(-i,2-i-j)$ for each $i \ge 0$, $j\ge 1$,  satisfying the equation (\ref{dobjectequation})
of Definition~\ref{defn:dAinftystr}:
\begin{equation*}
\sum\limits_{\substack{ u=i+p, v=j+q-1 \\ j=1+r+t}} (-1)^{rq+t+pj}
m_{ij} (1^{\otimes r} \otimes m_{pq} \otimes 1^{\otimes t}) = 0.
\end{equation*}
Consequently the family of maps $m_{0n}$ satisfies the equation
\begin{equation*}
\sum\limits_{\substack{v=j+q-1 \\ j=1+r+t}} (-1)^{rq+t}
m_{0j} (1^{\otimes r} \otimes m_{0q} \otimes 1^{\otimes t}) = 0,
\end{equation*}
which is the sign convention of Getzler and Jones in~\cite{GJ90}.
In the definition of derived $A_\infty$-algebra if we pick the  generators 
	\[
	\widetilde m_{ij}=(-1)^{\frac{j(j-1)}{2}}m_{ij}
	\]
one gets
\begin{equation*}
\sum\limits_{\substack{ u=i+p, v=j+q-1}}\widetilde m_{ij} \star \widetilde m_{pq}=0,
\end{equation*}
with 
\begin{equation}\label{def:star}
\widetilde m_{ij} \star \widetilde m_{pq} =\sum\limits_{k=1}^j (-1)^{i+j+(q-1)(k+j)+p(j-1)}
\widetilde m_{ij} \circ_k \widetilde m_{pq}
\end{equation}
The family $\widetilde m_{0n}$ satisfies
\begin{equation*}
\sum\limits_{\substack{ u=i+p, v=j+q-1}}\sum\limits_{k=1}^j (-1)^{vq+k(q-1)}
\widetilde m_{0j} \circ_k \widetilde m_{0q}=0,
\end{equation*}
which is the original definition of $A_\infty$-algebras by Stasheff \cite{Sta63}.

\subsection{Different sign conventions for the cooperad $(\dAs)^{\antishriek}$}

For any graded cooperad $\mathcal{C}$, if one has elements $a_{uv}\in {\mathcal{C}}(v)$ satisfying
$$\Delta(a_{uv})=\sum\limits_{q_1+\cdots+q_j=v} (-1)^{X(I)} a_{ij}; a_I$$
with $a_I=a_{p_1q_1}\otimes\cdots\otimes a_{p_jq_j}$, then setting
$\widetilde a_{uv}=(-1)^{\frac{v(v-1)}{2}} a_{uv}$, one gets

$$\Delta(\widetilde a_{uv})=\sum (-1)^{X(I)}(-1)^{\phi(I)} \widetilde a_{ij}; \widetilde a_I,$$
where $\phi(I)$ is obtained modulo 2 as
\[
\phi(I)=\frac{1}{2}\left(j(j-1)+(\sum_k q_k)((\sum_l q_l)-1)+\sum_k q_k^2-\sum_l q_l \right)=
\sum_{k=1}^{j-1} k+\sum_{k<l} q_kq_l.
\]

Recall that the cooperad $(\dAs)^{\antishriek}$ has generators  $\mu_{uv}$ of bidegree $(-u,1-u-v)$ with structure map given by
\begin{equation*}
    \Delta(\mu_{uv})=\sum\limits_{i+p_1+\cdots+p_j=u\atop{q_1+\cdots+q_j=v}} 
                    (-1)^{X\left((p_1,q_1), \dots, (p_j,q_j)\right)}
                        \mu_{ij};\mu_{p_1q_1}\otimes\cdots\otimes \mu_{p_j q_j},
\end{equation*}
with
$  X\left((p_1,q_1), \dots, (p_j,q_j)\right) = \sum\limits_{1\leq k<l\leq j} \Big(p_k+ q_k(p_l + q_l+1)     \Big)$ (see formula (4) in \cite{LRW13}). 
Consequently the bigraded $\kfield$-module generated by the family $(\widetilde\mu_{0v})_{v\geq 0}$ is a subcooperad of $(\dAs)^{\antishriek}$  and satisfies
\begin{equation*}
    \Delta(\widetilde\mu_{0v})=\sum\limits_{q_1+\cdots+q_j=v}
                    (-1)^{X'(q_1,\ldots,q_j)}
                        \widetilde\mu_{0j};\widetilde\mu_{0q_1}\otimes\cdots\otimes \widetilde\mu_{0 q_j},
\end{equation*}
with
\[
X'(q_1,\ldots,q_j)\equiv\sum\limits_{1\leq k<l\leq j} (q_k(q_l+1)+q_kq_l )+\sum_{k=1}^{j-1} k\equiv\sum\limits_{k=1}^{j-1} (q_k(k+j)+k)
\equiv\sum\limits_{k=1}^{j-1}(q_k+1)(k+j),
\]
where the computation is performed modulo 2.
We recover the signs obtained by Loday and Vallette in \cite{LV12} in their definition of the cooperad $\As^{\antishriek}$.

Note that if we choose $\widetilde\mu_{uv}$ as generators for the cooperad  $(\dAs)^{\antishriek}$, the structure map is given by
\begin{equation*}
    \Delta(\widetilde\mu_{uv})=\sum\limits_{i+p_1+\cdots+p_j=u\atop{q_1+\cdots+q_j=v}} 
                    (-1)^{X'\left((p_1,q_1), \dots, (p_j,q_j)\right)}
                        \widetilde\mu_{ij};\widetilde\mu_{p_1q_1}\otimes\cdots\otimes \widetilde\mu_{p_j q_j},
\end{equation*}
where
$X'\left((p_1,q_1), \dots, (p_j,q_j)\right)=\sum\limits_{k=1}^{j-1} (p_k+q_k+1)(k+j)+\sum_{k<l} (q_kp_l)$.

\subsection{Description of the cooperad $\Lambda(\dAs)^{\antishriek}$}

The notion of suspension of a cooperad was explained in Section~\ref{subsec:coopsusp}.
Here we establish the sign conventions for the cooperad structure of $\Lambda\mcC$ for a cooperad $\mcC$ in $\BCv$.

\begin{proposition}\label{sign:lambda}
Let $\mcC$ be a cooperad in $\BCv$. Then $\Lambda \mcC$ is the cooperad with
$$(\Lambda \mcC)(n) = s^{1-n} \mcC(n).$$
The cooperad structure of $\Lambda \mcC$ is given by
$$ s^{1-n}c \mapsto \sum (-1)^{\sum_{k=1}^j\vert s^{l_k+1} \vert \vert s^{1-j} c' \vert + \sum_{k=2}^j \sum_{l=1}^{k-1} \vert s^{l_k+1}\vert \vert sc''_l \vert}s^{1-j}c';s^{1-l_1}c''_1,...,s^{1-l_j}c''_j,$$
where the decomposition map of $\mcC$ maps $c \in \mcC(n)$ to the sum $\sum c'; c''_1,...,c''_j$ with $c' \in \mcC(j)$ and  $c''_i \in \mcC(l_i)$ for $1\leq i \leq j$. 
\end{proposition}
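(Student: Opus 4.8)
The plan is to realize the comultiplication of $\Lambda\mcC$ by transporting $\Delta_\mcC$ through the suspension isomorphisms, so that the only real work is to make the resulting Koszul signs explicit. Recall from Section~\ref{subsec:coopsusp} that $\Lambda\mcC$ is again a cooperad (with $\Lambda$ invertible on collections, its inverse being $s^{n-1}$ arity-wise); the content of the proposition is purely the description of the structure map and its signs. Concretely, I would define $\Delta_{\Lambda\mcC}$ as the composite
\[
\Lambda\mcC(n)=s^{1-n}\mcC(n)\xrightarrow{\,s^{1-n}\Delta_\mcC\,} s^{1-n}(\mcC\circ\mcC)(n)\xrightarrow{\ \cong\ }(\Lambda\mcC\circ\Lambda\mcC)(n),
\]
where the second arrow is the canonical comparison isomorphism, and then compute that second arrow one summand at a time.

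First I would fix a summand of $(\mcC\circ\mcC)(n)$ indexed by a decomposition $l_1+\cdots+l_j=n$, namely $\mcC(j)\otimes\mcC(l_1)\otimes\cdots\otimes\mcC(l_j)$, and compare
\[
s^{1-n}\bigl(\mcC(j)\otimes\mcC(l_1)\otimes\cdots\otimes\mcC(l_j)\bigr)
\quad\text{with}\quad
s^{1-j}\mcC(j)\otimes s^{1-l_1}\mcC(l_1)\otimes\cdots\otimes s^{1-l_j}\mcC(l_j),
\]
which is the matching summand of $(\Lambda\mcC\circ\Lambda\mcC)(n)$. These agree as objects because the total vertical degree matches: $1-n=(1-j)+\sum_{k=1}^j(1-l_k)$. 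The comparison map redistributes the single outer suspension $s^{1-n}$ into one factor $s^{1-j}$ in front of $c'$ and one factor $s^{1-l_k}$ in front of each $c''_k$. Placing $s^{1-j}$ on $c'$ costs nothing, since it is already leftmost; routing each $s^{1-l_k}$ to its slot forces it to commute past $c'$ and past $c''_1,\dots,c''_{k-1}$, and each such interchange contributes a Koszul sign according to the rule $|(r,s)||(r',s')|=rr'+ss'$ of $\BCv$.

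The sign computation is then the heart of the matter. Since $s$ is concentrated in vertical degree, every interchange sign reduces to a product of vertical degrees, and the two families of interchanges---$s^{1-l_k}$ past $c'$ for each $k$, and $s^{1-l_k}$ past each earlier $c''_l$ with $l<k$---produce exactly the two sums $\sum_{k=1}^j|s^{l_k+1}||s^{1-j}c'|$ and $\sum_{k=2}^j\sum_{l=1}^{k-1}|s^{l_k+1}||sc''_l|$ in the statement. Here I would record the small observation that makes the bookkeeping match the stated formula cleanly: because $s$ has odd vertical degree, the exponent on $s$ in each factor matters only modulo $2$, so $|s^{l_k+1}|$ may be used in place of $|s^{1-l_k}|$, and $|sc''_l|$ in place of $|s^{1-l_l}c''_l|$.

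Finally I would check that this $\Delta_{\Lambda\mcC}$ is genuinely a cooperad structure. Counitality is immediate, since $\Lambda I=I$ and the counit of $\mcC$ is supported in arity one, where $s^{1-1}=\mathrm{id}$. Coassociativity follows because $\Delta_\mcC$ is coassociative and the comparison isomorphisms are coherent: the two ways of iterating $\Delta_{\Lambda\mcC}$ on a three-level decomposition differ from the corresponding iterations for $\mcC$ only by the same redistribution of suspensions, so the Koszul signs cancel against each other. The main obstacle I anticipate is precisely this sign tracking---both assembling the single interchange signs into the stated double sum, and verifying that under a second application they reassemble coherently, so that coassociativity is inherited rather than merely the underlying sign-free decomposition; everything else is formal once the comparison isomorphism is in hand.
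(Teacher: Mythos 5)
Your overall strategy---realising $\Delta_{\Lambda\mcC}$ as $s^{1-n}\Delta_{\mcC}$ followed by a comparison isomorphism and computing its Koszul signs---is the same as the paper's, but your sign computation has a genuine error, located exactly at the step you flag as a ``small observation''. Moving each block $s^{1-l_k}$ as a single unit past the already-placed factors gives the exponent $\sum_{k}|s^{1-l_k}|\bigl(|s^{1-j}c'|+\sum_{l<k}|s^{1-l_l}c''_l|\bigr)$, and you then replace $|s^{1-l_l}c''_l|$ by $|sc''_l|$ on the grounds that ``the exponent on $s$ matters only modulo 2''. That principle is valid for your first replacement, since $1-l_k$ and $1+l_k$ differ by $2l_k$; but $|s^{1-l_l}c''_l|$ and $|sc''_l|$ differ by $l_l$ modulo $2$, which is odd whenever $l_l$ is odd. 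So your routing does not produce the stated formula: the two exponents differ by $\sum_{l<k}(1+l_k)\,l_l \pmod 2$. Concretely, take $j=2$, $l_1=1$, $l_2=2$, with $c',c''_1,c''_2$ of even vertical degree: the proposition's exponent is
$|s^{2}||s^{-1}c'|+|s^{3}||s^{-1}c'|+|s^{3}||sc''_1|\equiv 0\cdot 1+1\cdot 1+1\cdot 1\equiv 0$,
whereas yours is
$|s^{0}||s^{-1}c'|+|s^{-1}|\bigl(|s^{-1}c'|+|s^{0}c''_1|\bigr)\equiv 0+1\cdot(1+0)\equiv 1$,
so the two signs disagree.

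The conceptual point being missed is that there is no single ``canonical'' comparison isomorphism $s^{1-n}\bigl(\mcC(j)\otimes\mcC(l_1)\otimes\cdots\otimes\mcC(l_j)\bigr)\cong s^{1-j}\mcC(j)\otimes s^{1-l_1}\mcC(l_1)\otimes\cdots\otimes s^{1-l_j}\mcC(l_j)$: one must fix a decomposition of $s^{1-n}$ into an ordered tensor product of elementary suspension cells, and since these cells have odd degree, different orderings yield maps differing by signs. The paper's proof makes a specific choice, and that choice is precisely what produces the intermediate degrees $|sc''_l|$ in the formula: first place $s^{1-j}$ on $c'$ (sign free), then distribute one copy of $s$ onto each $c''_k$ going left to right, and only afterwards distribute the factors $s^{-l_k}$ going right to left, so that each moving suspension passes over factors in the intermediate state $s^{1-j}c'\otimes sc''_1\otimes\cdots\otimes sc''_{k-1}$. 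Your argument would be repaired by adopting this two-stage distribution (or any bookkeeping realising the same cell ordering); as written it establishes a different, sign-discrepant structure map, and the discrepancy is not harmless, since the stated signs feed directly into Corollary~\ref{cor:lambda}, Example~\ref{Ex:signs} and Proposition~\ref{prop:dAscoalg}. Your remarks on counitality and coassociativity are reasonable but tangential; the entire content of the paper's proof is the sign algorithm.
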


\begin{proof}We explain the algorithm for distributing $s^{1-n}$ over the different tensor products.

Firstly put $s^{1-j}$ in front of $c'$. This operation is sign free.

Secondly, distribute one $s$, that will be in front of $c''_k$, for $k$ going from 1 to $j$: first $s$ jumps over $s^{1-j}c'$ and is placed in 
front of $c''_1$; second $s$ jumps over
$s^{1-j}c'\otimes sc''_1$ and is placed in front of $c''_2$. The sign  involved is obtained as $(-1)^x$ where $x$ mod $2$ is
$\sum_{k=1}^j |s||s^{1-j}c'|+\sum_{k=2}^j |s|(\sum_{l=1}^{k-1}|sc''_l|).$

Finally, for $k$ going down from $j$ to $1$ distribute $s^{-l_k}$ over $s^{1-j}c'\otimes sc''_1\otimes\cdots\otimes sc''_{k-1}$.
The sign involved is  obtained as $(-1)^x$ where $x$ mod $2$ is
$\sum_{k=1}^j |s^{l_k}||s^{1-j}c'|+\sum_{k=2}^j |s^{l_k}|\sum_{l=1}^{k-1}|sc''_l|.$
\end{proof}

\begin{corollary}\label{cor:lambda}The cooperad $\Lambda(\dAs)^{\antishriek}$ has generators $\alpha_{uv}$ of bidegree $(-u,-u)$ and the 
cooperad structure is given by
$$\Delta(\alpha_{uv})=\sum\limits_{i+p_1+\cdots+p_j=u\atop{q_1+\cdots+q_j=v}} 
                    (-1)^{i(v+j)+\sum\limits_{1\leq k<l\leq j} p_kq_l+q_kp_l}
                    \alpha_{ij};\alpha_{p_1q_1}\otimes\cdots\otimes \alpha_{p_j q_j},$$
\end{corollary}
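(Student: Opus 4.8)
The plan is to read off the corollary by specialising Proposition~\ref{sign:lambda} to the cooperad $\mcC=(\dAs)^{\antishriek}$, whose internal decomposition together with its signs $(-1)^X$ is exactly the formula for $\Delta(\mu_{uv})$ recorded just above in the appendix. First I would fix notation: the generators of $\Lambda(\dAs)^{\antishriek}(v)=s^{1-v}(\dAs)^{\antishriek}(v)$ are $\alpha_{uv}:=s^{1-v}\mu_{uv}$. Since the convention $(sA)^j_i=A^{j+1}_i$ (equivalently $x=s\e$ of bidegree $(-1,-1)$ built from $\e$ of bidegree $(-1,0)$) makes $s$ an operation of bidegree $(0,-1)$, the operator $s^{1-v}$ has bidegree $(0,v-1)$; combined with $|\mu_{uv}|=(-u,1-u-v)$ this gives $|\alpha_{uv}|=(-u,-u)$, as asserted. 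The same bookkeeping produces the two bidegrees that will feed the Koszul sign rule: $|s^{1-j}\mu_{ij}|=|\alpha_{ij}|=(-i,-i)$ and $|s\mu_{p_lq_l}|=(-p_l,-p_l-q_l)$.

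Next I would substitute into Proposition~\ref{sign:lambda}, taking $n=v$, $c=\mu_{uv}$, $c'=\mu_{ij}$ and $c''_k=\mu_{p_kq_k}\in\mcC(q_k)$, so that the arities are $l_k=q_k$. The decomposition $\sum c';c''_1,\dots,c''_j$ already carries the scalar sign $(-1)^{X}$ with $X=\sum_{k<l}\bigl(p_k+q_k(p_l+q_l+1)\bigr)$, so the coefficient attached to $\alpha_{ij};\alpha_{p_1q_1}\otimes\cdots\otimes\alpha_{p_jq_j}$ is $(-1)^{X+Y}$, where $Y$ is the suspension exponent of the proposition (which depends only on degrees, not on the scalar $(-1)^X$). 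Applying the pairing $|(r,s)||(r',s')|=rr'+ss'$ to the bidegrees above, the first part of $Y$ contributes, term by term, $|s^{q_k+1}|\,|\alpha_{ij}|=i(q_k+1)$, so that $\sum_{k=1}^{j}i(q_k+1)=i(v+j)$; the second part contributes $|s^{q_k+1}|\,|s\mu_{p_lq_l}|=(q_k+1)(p_l+q_l)$, summed over $1\le l<k\le j$. Hence $Y=i(v+j)+\sum_{l<k}(q_k+1)(p_l+q_l)$.

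Finally I would add $X$ and $Y$ and reduce modulo $2$. Reindexing the $l<k$ sum of $Y$ as a $k<l$ sum turns its cross-terms into $\sum_{k<l}(q_l+1)(p_k+q_k)$, so that the combined cross-term sum becomes
$$\sum_{k<l}\Bigl[\bigl(p_k+q_k+q_kp_l+q_kq_l\bigr)+\bigl(p_k+q_k+p_kq_l+q_kq_l\bigr)\Bigr].$$
In each summand the monomials $p_k$, $q_k$ and $q_kq_l$ occur twice and so vanish modulo $2$, leaving exactly $p_kq_l+q_kp_l$. Therefore $X+Y\equiv i(v+j)+\sum_{1\le k<l\le j}(p_kq_l+q_kp_l)\pmod 2$, which is precisely the exponent in the statement, and the corollary follows. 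The only delicate point is this sign bookkeeping — keeping the two-component Koszul pairing straight, aligning the $l<k$ indexing produced by the cooperadic suspension with the $k<l$ indexing of the $(\dAs)^{\antishriek}$-decomposition, and spotting the modulo-$2$ cancellations; once the bidegrees are pinned down, everything else is a direct substitution into the two results already in hand.
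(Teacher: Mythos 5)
Your proposal is correct and follows essentially the same route as the paper: both specialise Proposition~\ref{sign:lambda} to $\mcC=(\dAs)^{\antishriek}$, compute the suspension exponent from the bidegrees $|\alpha_{ij}|=(-i,-i)$ and $|s\mu_{p_lq_l}|=(-p_l,-p_l-q_l)$, add the internal sign $X(I)$, and reduce modulo~$2$ after reindexing the $l<k$ sum. Your computation of the combined exponent, including the cancellation of $p_k$, $q_k$ and $q_kq_l$, matches the paper's displayed congruence exactly.
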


Note that if $u=0,i=0,p_k=0$ one gets exactly the cooperad $\As^*$.

\begin{proof}This is a short sign computation. Let $I=((p_1,q_1),\ldots,(p_j,q_j))$ and let $S(I)$ be the sum such that $(-1)^{S(I)}$ is the sign 
defined in Proposition \ref{sign:lambda}. 
We recall that $\alpha_{uv}=s^{1-v}\mu_{uv}$ and that $s\mu_{uv}$ has bidegree $(-u,-u-v)$. Computing mod 2, one gets
\begin{multline*}
X(I)+S(I)\equiv\sum\limits_{1\leq k<l\leq j} \Big(p_k+ q_k(p_l + q_l+1)\Big) +\sum\limits_{k=1}^j(q_k+1)i+\sum\limits_{1\leq k<l\leq j}(p_k+q_k)(q_l+1)\\
\equiv i(v+j)+\sum\limits_{1\leq k<l\leq j} q_kp_l+p_kq_l.
\end{multline*}
\end{proof}

\begin{example}\label{Ex:signs}\rm
 As an example one has
\begin{eqnarray*}
  \Delta(\alpha_{12})&=&\alpha_{12};\alpha_{01}\otimes\alpha_{01}-\alpha_{11};\alpha_{02}-
  \alpha_{02};(\alpha_{11}\otimes\alpha_{01}+\alpha_{01}\otimes\alpha_{11})+\alpha_{01};\alpha_{12}, \\
  \Delta(\alpha_{ij})&=&(-1)^{i(j+1)} \alpha_{i1};\alpha_{0j}+\text{other terms}.
 \end{eqnarray*}
\end{example}

\providecommand{\bysame}{\leavevmode\hbox to3em{\hrulefill}\thinspace}
\providecommand{\MR}{\relax\ifhmode\unskip\space\fi MR }
\providecommand{\MRhref}[2]{%
  \href{http://www.ams.org/mathscinet-getitem?mr=#1}{#2}
}
\providecommand{\href}[2]{#2}


\begin{thebibliography}{LRW13}

\bibitem[AL10]{Allocca10}
Michael~P Allocca and Tom Lada, \emph{A finite dimensional {A}-infinity algebra
  example}, Georgian Math. J \textbf{17} (2010), 1--12.

\bibitem[Bec69]{Beck69}
Jon Beck, \emph{Distributive laws}, Sem. on {T}riples and {C}ategorical
  {H}omology {T}heory ({ETH}, {Z}\"urich, 1966/67), Springer, Berlin, 1969,
  pp.~119--140.

\bibitem[Fre09]{Fr09}
Benoit Fresse, \emph{Modules over operads and functors}, Lecture Notes in
  Mathematics, vol. 1967, Springer-Verlag, Berlin, 2009.

\bibitem[GJ90]{GJ90}
Ezra Getzler and John D.~S. Jones, \emph{{$A_\infty$}-algebras and the cyclic
  bar complex}, Illinois J. Math. \textbf{34} (1990), no.~2, 256--283.

\bibitem[GJ94]{GJ94}
\bysame, \emph{Operads, homotopy algebra and iterated integrals for double loop
  spaces}, preprint, hep-th/9403055, 1994.

\bibitem[GK94]{GK94}
Victor Ginzburg and Mikhail Kapranov, \emph{Koszul duality for operads}, Duke
  Math. J. \textbf{76} (1994), no.~1, 203--272.

\bibitem[Kad80]{Kad79}
T.~V. Kadei{\v{s}}vili, \emph{On the theory of homology of fiber spaces},
  Uspekhi Mat. Nauk \textbf{35} (1980), no.~3(213), 183--188, International
  Topology Conference (Moscow State Univ., Moscow, 1979).

\bibitem[LRW13]{LRW13}
Muriel Livernet, Constanze Roitzheim, and Sarah Whitehouse, \emph{Derived
  {$A\sb \infty$}-algebras in an operadic context}, Algebr. Geom. Topol.
  \textbf{13} (2013), no.~1, 409--440.

\bibitem[LV12]{LV12}
Jean-Louis Loday and Bruno Vallette, \emph{Algebraic operads}, Grundlehren der
  Mathematischen Wissenschaften [Fundamental Principles of Mathematical
  Sciences], vol. 346, Springer, Heidelberg, 2012.

\bibitem[Sag10]{Sag10}
Steffen Sagave, \emph{D{G}-algebras and derived {$A\sb \infty$}-algebras}, J.
  Reine Angew. Math. \textbf{639} (2010), 73--105.

\bibitem[Sta63]{Sta63}
James~Dillon Stasheff, \emph{Homotopy associativity of {$H$}-spaces. {I},
  {II}}, Trans. Amer. Math. Soc. 108 (1963), 275-292; ibid. \textbf{108}
  (1963), 293--312.

\bibitem[Yal14]{Yalin13}
Sinan Yalin, \emph{The homotopy theory of bialgebras over pairs of operads},
  Journal of Pure and Applied Algebra \textbf{218} (2014), no.~6, 973--991.

\end{thebibliography}
\end{document}